%\documentclass[11pt]{amsart} 
%
%
%% Personal Elliptic Nets Package
%\usepackage{ellnet}
\newcommand{\wvec}[1]{W\hspace{-0.035 in}\left( #1\right)}
%\newcommand{\myenspace}{}
%
%
%% Other Packages
%\usepackage{fullpage}
%\usepackage{xypic}
%\input xy
%\xyoption{all}
%\usepackage{graphpap}
%\usepackage{url}
%
%% Bibliography
%\include{biblio}
%
%% Page formatting
%%\pagestyle{myheadings}
%%\setlength{\headsep}{4mm}
%%\parskip = 1mm 
%
%% Alternate squeezy page formatting
%% Page formatting
%\oddsidemargin -0.1in
%\evensidemargin -0.1in
%\topmargin -0.10in
%\pagestyle{myheadings}
%\setlength{\headsep}{4mm}
%\parskip = 1mm 
%\setlength{\textheight}{9.2in}
%\setlength{\textwidth}{6.7in}
%
%
%
%% Theorem Environments
%\newtheorem{theorem}{Theorem}[section]
%\newtheorem{corollary}[theorem]{Corollary}
%\newtheorem{problem}{Problem}[section]
%\newtheorem{lemma}[theorem]{Lemma} 
%\newtheorem{proposition}[theorem]{Proposition} 
%\theoremstyle{definition}
%\newtheorem{definition}{Definition}[section]
%
%\theoremstyle{remark}
%\newtheorem{remark}{Remark}[section]
%
%\theoremstyle{remark}
%\newtheorem{example}{Example}[section]
%
%\newcommand\Pf{{\noindent \bf Proof: }}

\documentclass[letterpaper]{llncs}

\usepackage{xypic}
\input xy
\xyoption{all}
\usepackage{graphpap}
\usepackage{url}

\include{biblio}

% Personal Elliptic Nets Package
\usepackage{ellnet}

% Unnumbered Theorem
\spnewtheorem*{nonumtheorem}{Theorem}{\normalfont\bfseries}{\itshape}

% llncs black board bolds
%\renewcommand{\C}{\bbbc}
%\renewcommand{\P}{\bbbp}
%\renewcommand{\R}{\bbbr}
%\renewcommand{\F}{\bbbf}
%\renewcommand{\Q}{\bbbq}
%\renewcommand{\Z}{\bbbz}

%\title[ECDLP and Equivalent Hard Problems for EDS]{The Elliptic Curve Discrete Logarithm Problem and Equivalent Hard Problems for Elliptic Divisibility Sequences}  % Declares the document's title (and short form of title)
\title{The elliptic curve discrete logarithm problem and equivalent hard problems for elliptic divisibility sequences}  % Declares the document's title (and short form of title)
\titlerunning{ECDLP and equivalent hard problems for EDS}
\author{Kristin E. Lauter\inst{1} and Katherine E. Stange\inst{2} \fnmsep \thanks{The second author was supported by NSERC Award PGS D2 331379-2006, and this work was performed during an internship of the second author at Microsoft Research.}}      % Declares the authors' names
\date{\today}  % The date

%\author{Kristin E. Lauter}
\institute{Microsoft Research, One Microsoft Way, Redmond, WA 98052 \email{klauter@microsoft.com} \and Department of Mathematics, Harvard University, Cambridge, MA 02138 \email{stange@math.harvard.edu}}
%\email{klauter@microsoft.com \and stange@math.harvard.edu}
%\thanks{ \and The second author was supported by NSERC Award PGS D2 331379-2006, and this work was performed during an internship of the second author at Microsoft Research.}

%\author{Katherine E. Stange}
%\institute{Department of
%Mathematics, Brown University, Providence,
%RI 02912-1917}
%\email{stange@math.brown.edu}
%\thanks{The second author was supported by NSERC Award PGS D2 331379-2006, and this work was performed during an internship of the second author at Microsoft Research.}

%\keywords{elliptic curve cryptography, elliptic curves discrete logarithm, elliptic divisibility sequence, elliptic net}%

%\subjclass[2000]{Primary 94A60, 14G50, 11T71, Secondary 11B37, 11B39, 14H52}

\begin{document}

\maketitle % include title, authors

\begin{abstract}
We define three hard problems in the theory of elliptic divisibility sequences (\emph{EDS Association}, \emph{EDS Residue} and \emph{EDS Discrete Log}), each of which is solvable in sub-exponential time if and only if the elliptic curve discrete logarithm problem is solvable in sub-exponential time.  We also relate the problem of EDS Association to the Tate pairing and the MOV, Frey-R\"{u}ck and Shipsey EDS attacks on the elliptic curve discrete logarithm problem in the cases where these apply. 
%%
%{\bf Keywords:} elliptic curve cryptography, elliptic curves discrete logarithm, elliptic divisibility sequence, elliptic net
\end{abstract}

%\tableofcontents % uncomment this to see table of contents

\section{Introduction}

The security of elliptic curve cryptography rests on the assumption that the \emph{elliptic curve discrete logarithm problem} is hard.

\begin{problem}[Elliptic Curve Discrete Logarithm Problem (ECDLP)]
\label{prob: ecdlp}
Let $E$ be an elliptic curve over a finite field $K$.  Suppose there are points $P, Q \in E(K)$ given such that $Q \in \left< P \right>$.  Determine $k$ such that $Q = [k]P$.
\end{problem}

In this article, we explore several related hard problems with a view to expanding the theoretical foundations of the security of ECDLP as a hard problem.  Our research is inspired by work of Rachel Shipsey in her thesis \cite{Shi}, relating the ECDLP to elliptic divisibility sequences (EDS).  An elliptic divisibility sequence is a recurrence sequence $W(n)$ satisfying the relation
\[
W(n+m)W(n-m)= W(n+1)W(n-1)W(m)^2 -W(m+1)W(m-1)W(n)^2.
\]
We relate Shipsey's work to the MOV and Frey-R\"{u}ck attacks and explain their limitations from the EDS point of view.  We also point to a specific avenue for attacking ECDLP by analysing the quadratic residuosity of elliptic divisibility sequences.

The study of elliptic divisibility sequences was introduced by Morgan Ward \cite{War}.   Let $\Psi_n$ denote the $n$-th division polynomial of an elliptic curve $E$ over the rationals.  The sequence $W_{E,P}: \Z \rightarrow \Q$ of the form $W_{E,P}(n) = \Psi_n(P)$ for some fixed point $P \in E(\Q)$ is an elliptic divisibility sequence, and Ward showed that almost all elliptic divisibility sequences arise in this way.  This relationship is the basis of our work here.

The general theory has been developed by Swart \cite{Swa}, Ayad \cite{Aya}, Silverman \cite{Sil5}\cite{Sil4}, Everest, McLaren and Thomas Ward \cite{EveMclWar} and, more recently, generalised to higher rank \emph{elliptic nets} by Stange \cite{Sta4}\cite{Sta1}.  For an overview of research, see \cite{EvePooShpWar}.  Sections \ref{sec: ellnets} and \ref{sec: perfper} provide brief background on elliptic divisibility sequences and elliptic nets, more information about which can be found in \cite{Sta4}\cite{Sta1}\cite{Sta3}.

The hard problems for elliptic divisibility sequences we consider are:

\begin{problem}[EDS Association]
\label{prob: seqass}
Let $E$ be an elliptic curve over a finite field $K$.  Suppose there are points $P, Q \in E(K)$ given such that $Q \in \left< P \right>$, $Q \neq \mathcal{O}$, and $\operatorname{ord}(P) \geq 4$.  Determine $W_{E,P}(k)$ for $0 < k < \operatorname{ord}(P)$ such that $Q = [k]P$.
\end{problem}

\begin{problem}[EDS Residue]
\label{prob: seqres}
Let $E$ be an elliptic curve over a finite field $K$.  Suppose there are points $P, Q \in E(K)$ given such that $Q \in \left< P \right>$, $Q \neq \mathcal{O}$, and $\operatorname{ord}(P) \geq 4$.  Determine the quadratic residuosity of $W_{E,P}(k)$ for $0 < k < \operatorname{ord}(P)$ such that $Q = [k]P$.
\end{problem}

\begin{problem}[Width s EDS Discrete Log]
\label{prob: edsdlp}
Given an elliptic divisibility sequence $W$ and terms $W(k)$, $W(k+1)$, $\ldots$, $W(k+s-1)$, determine $k$.
\end{problem}

Problem \ref{prob: edsdlp} was considered by Shipsey \cite[\S 6.3.1]{Shi} and Gosper, Orman and Schroeppel \cite[\S 3]{GosOrmSch}.  Problem \ref{prob: seqass} is also implicit in \cite[\S 6.4.1]{Shi} and \cite[\S 3]{GosOrmSch}.

A perfectly periodic elliptic divisibility sequence is one which has a finite period $n$ and whose first positive index $k$ at which $W(k) = 0$ is $k=n$.  If a sequence is not perfectly periodic, then it has period $n > k$.  In Section \ref{sec: equiv}, we prove the following theorem.

\begin{theorem} \label{mainthm}
Let $E$ be an elliptic curve over a finite field $K=\F_q$ of characteristic $\neq 2$.  If any one of the following problems is solvable in sub-exponential time, then all of them are:
\begin{enumerate}
\item Problem \ref{prob: ecdlp}:  ECDLP
\item Problem \ref{prob: seqass}:  EDS Association for non-perfectly periodic sequences
\item Problem \ref{prob: seqres}:  EDS Residue for non-perfectly periodic sequences
\item Problem \ref{prob: edsdlp} ($s=3$):  Width 3 EDS Discrete Log for perfectly periodic sequences
\end{enumerate}
\end{theorem}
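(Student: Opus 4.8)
\noindent\emph{Proof proposal.}
The plan is to establish a cycle of reductions $(1)\Rightarrow(2)\Rightarrow(3)\Rightarrow(1)$ together with the separate equivalence $(1)\Leftrightarrow(4)$; all but one of the individual reductions will in fact run in polynomial time, and only one step should need the extra sub‑exponential slack in the statement of Theorem~\ref{mainthm}. Where the problem statements demand it, the passage between perfectly periodic and non‑perfectly periodic sequences will be handled by the scaling equivalence developed in Section~\ref{sec: perfper}. Throughout, the hypotheses $Q\neq\mathcal{O}$ and $\operatorname{ord}(P)\geq 4$ ensure that the terms $W_{E,P}(k)$ in question lie outside the degenerate initial segment and are nonzero, so that "the quadratic residuosity of $W_{E,P}(k)$" is meaningful, and $\operatorname{char}K\neq 2$ is what makes the quadratic character relevant at all.

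The straightforward reductions are $(1)\Rightarrow(2)$ and $(2)\Rightarrow(3)$. For $(1)\Rightarrow(2)$: given $(E,P,Q)$ with $Q=[k]P$, call the ECDLP oracle to obtain $k$ and then evaluate $W_{E,P}(k)=\Psi_k(P)$ directly; although $k$ may be exponentially large, this single term is computable in time polynomial in $\log q$ by the double‑and‑add block recurrence for elliptic divisibility sequences (the rank‑one elliptic net algorithm; cf.\ \cite{Shi},\cite{Sta4}). For $(2)\Rightarrow(3)$: the quadratic residuosity of the given nonzero element $W_{E,P}(k)\in\F_q^*$ is read off by one Legendre‑symbol (exponentiation) computation. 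The equivalence $(1)\Leftrightarrow(4)$ I would obtain from the dictionary between perfectly periodic elliptic divisibility sequences and points on elliptic curves: by Ward's realisation theorem (Sections~\ref{sec: ellnets}, \ref{sec: perfper}, following \cite{War}) a generic perfectly periodic sequence $W$ of period $n$ is, up to scaling equivalence, $W_{E,P}$ for an explicitly recoverable curve $E/\F_q$ and point $P$ of order $n$, whose data are read off from the low‑index terms in polynomial time. The classical division‑polynomial identities expressing the affine coordinates of $[k]P$ through a short window $W_{E,P}(k-1),W_{E,P}(k),W_{E,P}(k+1)$ — and, read backwards, recovering that window projectively from the coordinates of $P$ and $[k]P$ — let one translate between ``recover $k$ from three consecutive terms'' and ECDLP on the associated curve; perfect periodicity is exactly what keeps the required window length at $s=3$ (and removes the periodicity‑scaling ambiguity, leaving only an $O(1)$ choice of perfectly periodic representative, resolved by testing each candidate $k$ against $Q=[k]P$). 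In the direction $(4)\Rightarrow(1)$ one must additionally note that $W_{E,P}$ itself need not be perfectly periodic but is scaling‑equivalent to a perfectly periodic sequence of period exactly $\operatorname{ord}(P)$, via an explicit transformation from Section~\ref{sec: perfper}.

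The heart of the theorem, and the step I expect to be the main obstacle, is $(3)\Rightarrow(1)$: recovering a full discrete logarithm from nothing more than an oracle returning the quadratic residuosity of the matched term $W_{E,P}(k)$. Here is the approach I would take. By Pohlig--Hellman it suffices to treat $n:=\operatorname{ord}(P)$ a prime (or prime power). The leverage comes from two structural facts about $W_{E,P}$ over $\F_q$: oddness, $W_{E,P}(-j)=-W_{E,P}(j)$, and the periodicity of Section~\ref{sec: perfper}, namely that the ratio $W_{E,P}(j+n)/W_{E,P}(j)$ is an elementary function of $j$ determined by $O(1)$ low‑index data. Combining these, for $m\in(0,n)$ the quotient $\chi\!\left(W_{E,P}(n-m)\right)/\chi\!\left(W_{E,P}(m)\right)$ equals a fixed sign times $(-1)^{m}$; hence querying the EDS Residue oracle at $(E,P,[j]Q)$ and at $(E,P,-[j]Q)$ and comparing reveals, for each chosen multiplier $j$, the parity of $jk\bmod n$. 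Aggregating these parity bits over many $j$ — the density of ``wrap‑around'' events among consecutive multipliers reveals the approximate ratio $k/n$, which is then refined to $k$ by a hidden‑number‑problem‑style (lattice) argument — determines $k$. The delicate point, and where the sub‑exponential allowance (if any is needed beyond polynomial time) is spent, is arguing that this parity sequence genuinely pins $k$ down and can be exploited efficiently; it is also here that $\operatorname{char}K\neq 2$ is essential. Chaining $(1)\Rightarrow(2)\Rightarrow(3)\Rightarrow(1)$ with $(1)\Leftrightarrow(4)$ then yields the full equivalence.
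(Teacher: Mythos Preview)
Your overall decomposition---the cycle $(1)\Rightarrow(2)\Rightarrow(3)\Rightarrow(1)$ together with $(1)\Leftrightarrow(4)$---matches the paper exactly, and the easy implications $(1)\Rightarrow(2)$, $(2)\Rightarrow(3)$, and $(1)\Leftrightarrow(4)$ are essentially as the paper handles them (the paper's $(1)\Leftrightarrow(4)$ is made precise via Theorems~\ref{thm: calcphi} and~\ref{thm: calcfromphi}, which compute the perfectly periodic window $\widetilde W_{E,P}(k),\widetilde W_{E,P}(k+1),\widetilde W_{E,P}(k+2)$ directly from $Q=[k]P$ and conversely recover $[k]P$ from that window).

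However, your argument for $(3)\Rightarrow(1)$ contains a genuine error. You claim that for $0<m<n$ (with $n=\operatorname{ord}(P)$) the quotient $\chi(W_{E,P}(n-m))/\chi(W_{E,P}(m))$ is a fixed sign times $(-1)^m$. Computing with Theorem~\ref{thm: perrank1} (taking $l=1$, $v=-m$) gives
\[
W_{E,P}(n-m) \;=\; W_{E,P}(-m)\,a^{-m}b \;=\; -\,W_{E,P}(m)\,a^{-m}b,
\]
so the ratio of characters is $\chi(-1)\chi(a)^{-m}\chi(b)$. For this to depend on the parity of $m$ you need $\chi(a)=-1$. But Theorem~\ref{thm: perrank1} also gives $a^{n}=b^{2}$, hence $\chi(a)^{n}=1$; and in the regime of the theorem $q-1$ is even (char $\neq 2$) so $n$, being coprime to $q-1$, is odd. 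Thus $\chi(a)=1$ unconditionally, and your comparison of the oracle at $[j]Q$ and $-[j]Q$ yields only a constant, not the parity of $jk\bmod n$. The subsequent hidden-number-problem step therefore has no input.

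The paper's mechanism for $(3)\Rightarrow(1)$ is different and avoids this obstruction. From equation~\eqref{eqn: perfper} one has $\phi(Q)=\phi(P)^{\,k^2-1}W_{E,P}(k)$, where $\phi(Q)$ is computable in $O((\log q)^3)$ time from $Q$ alone (Theorem~\ref{thm: calcphi}). Taking quadratic characters, a single oracle call for $\chi(W_{E,P}(k))$ reveals $\chi(\phi(P))^{\,k^2-1}$; when $\phi(P)$ is a non-residue this gives the parity of $k$ directly (Proposition~\ref{prop: qr}). One then iterates by halving $Q$ inside the odd-order group $\langle P\rangle$ (Proposition~\ref{prop: parity}), reading off the bits of $k$ one at a time---no lattice step is needed. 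The case where $\phi(P)$ happens to be a residue is handled by replacing $P$ with a suitable multiple.
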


Section \ref{sec: hardprobs} relates Problems \ref{prob: edsdlp} and \ref{prob: seqass} to the ECDLP.  Section \ref{sec: seqass} expands on Problem \ref{prob: seqass}.  Sections \ref{sec: seqres1} and \ref{sec: seqres2} discuss Problem \ref{prob: seqres}.  Section \ref{sec: edsdlp} remarks on Problem \ref{prob: edsdlp}. Section \ref{sec: equiv} proves Theorem \ref{mainthm}.  The relation with the MOV and Frey-R\"{u}ck attacks is discussed in Section \ref{sec: fqdlp}.

The authors would like to thank the referees for their helpful suggestions.

%A second purpose of this article is to relate these hard problems to the MOV and Frey-R\"{u}ck attacks (on curves where these apply) by combining results of Rachel Shipsey \cite{Shi} and Katherine Stange \cite{Sta3}:  this is discussed in Section \ref{sec: fqdlp}.

%Here I state the content of the paper, but do not write an introduction.  I use the word NEEDSWORK in places in the paper that are missing crucial things, to ease searching.  Also, the paper is too long.  This is already in stretched margins and smallest allowable font.  And we need to rename some theorems as lemmas or propositions, etc.

\section{Background on Elliptic Nets}
\label{sec: ellnets}

In this section we state the background definitions and results on elliptic divisibility sequences and elliptic nets that are needed for the rest of the paper.  For details and examples, see \cite{Sta4}\cite{Sta1}\cite{Sta3}.

\begin{definition}[{Stange \cite[Def. 2.1]{Sta4}\cite[Def. 3.1.1]{Sta1}}]
\label{def: ellnet}
Let $K$ be a field, $n > 0$ and integer.  An \emph{elliptic net} is any map $W: \Z^n \rightarrow K$ such that the following recurrence holds for all $p$, $q$, $r$, $s \in \Z^n$:
\begin{multline}
\label{eqn: ellrec}
\wvec{p + q + s} \wvec{p- q}\wvec{r+ s}\wvec{r} \\
+ \wvec{q+ r+ s}\wvec{ q- r}\wvec{ p+ s}\wvec{ p} \\
+ \wvec{ r + p+ s} \wvec{ r- p} \wvec{ q+ s}\wvec{ q} = 0
\end{multline}
We refer to $n$ as the \emph{rank} of the elliptic net.  An elliptic net of rank one is called an \emph{elliptic divisibility sequence}.
\end{definition}

One always has $W(-\mathbf{v}) = -W(\mathbf{v})$ and $W(\mathbf{0}) = 0$, and a restriction of an elliptic net to a sublattice of $\Z^n$ is again an elliptic net.  The important fact for our purposes is that any elliptic curve $E$ over $K$ and points $P_1, \ldots, P_n \in E(K)$ gives rise to a unique elliptic net $W_{E,P_1,\ldots,P_n} : \Z^n \rightarrow K$.  The principal theorem is as follows.

\begin{theorem}[{Stange \cite[Thm. 6.1]{Sta4}\cite[Thm. 7.1.1]{Sta1}}]
Let $n > 0$ be an integer.  Let $$E: f(x, y) = y^2 + \alpha_1 xy + \alpha_3 y - x^3 - \alpha_2 x^2 - \alpha_4 x - \alpha_6 = 0$$
be an elliptic curve defined over a field $K$.  Let $\e_i$ be the $\it{i^{th}}$ standard basis vector. For all $\mathbf{v} \in \Z^n$, there are functions $\Psi_\mathbf{v}: E^n \rightarrow K$ in the ring
$$  \left. \Z[\alpha_1, \alpha_2, \alpha_3, \alpha_4, \alpha_6][x_i,y_i]_{i=1}^n\left[(x_i - x_j)^{-1}\right]_{\substack{1 \leq i < j \leq n }} \right/ \left< f(x_i, y_i) \right>_{i=1}^n \subset K(E),$$
such that
\begin{enumerate}
\item $W(\mathbf{v}) = \Psi_{\mathbf{v}}$ satisfies the recurrence \eqref{eqn: ellrec}.
\item $\Psi_{\mathbf{v}} = 1$ whenever $\mathbf{v} = \e_i$ for some $1 \leq i \leq n$ or $\mathbf{v} = \e_i + \e_j$ for some $1 \leq i < j \leq n$.
\item $\Psi_\mathbf{v}$ vanishes at $\mathbf{P} = (P_1, \ldots, P_n) \in E^n$ if and only if $\mathbf{v} \cdot \mathbf{P} = \mathcal{O}$ on $E$ (and $\mathbf{v}$ is not one of the vectors specified in 2).
%\item The divisor of $\Psi_\mathbf{v}$ is
%\begin{multline*}
%( [v_1]\times\ldots\times[v_n])^*s^*  (\mathcal{O}) - \sum_{1 \leq k < j \leq n} v_kv_j (p_k^*\times p_j^*)s^* (\mathcal{O})
%- \sum_{k=1}^n \left(2v_k^2-\sum_{j=1}^nv_kv_j\right) p_k^* (\mathcal{O}) \myenspace ,
%\end{multline*}
\end{enumerate}
\end{theorem}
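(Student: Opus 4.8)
The plan is to construct the functions $\Psi_{\mathbf v}$ first over $K=\mathbb{C}$, where $E(\mathbb{C}) \cong \mathbb{C}/\Lambda$, and then to descend to an arbitrary field by an integrality and specialisation argument. Over $\mathbb{C}$ the natural candidates are the \emph{net polynomials} built from the Weierstrass sigma function $\sigma(z)=\sigma(z;\Lambda)$. For $\mathbf v=(v_1,\ldots,v_n)\in\Z^n$ and $\mathbf z=(z_1,\ldots,z_n)$, write $z_{\mathbf v}=\sum_i v_i z_i$ and set
\[
\Omega_{\mathbf v}(\mathbf z)=\frac{\sigma(z_{\mathbf v})}{\prod_{i=1}^{n}\sigma(z_i)^{\,2v_i^2-\sum_{j}v_iv_j}\ \prod_{1\le i<j\le n}\sigma(z_i+z_j)^{\,v_iv_j}},
\]
the denominator exponents being exactly those forced by requiring $\Omega_{\mathbf v}$ to be invariant under $z_k\mapsto z_k+\omega$ for every $\omega\in\Lambda$. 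First I would verify this ellipticity using the quasi-periodicity $\sigma(z+\omega)=\pm\,e^{\eta(\omega)(z+\omega/2)}\sigma(z)$: a direct computation shows that the $z$-linear and constant exponential factors acquired by the numerator under $z_k\mapsto z_k+\omega$ coincide exactly with those acquired by the denominator, so they cancel. Being elliptic in each variable separately, $\Omega_{\mathbf v}$ descends to a rational function on $E^n$, rational in the Weierstrass coordinates $(x_i,y_i)$; these descended functions are the sought $\Psi_{\mathbf v}$.

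The three required properties are then checked as follows. For property 2, substituting $\mathbf v=\e_i$ makes the numerator $\sigma(z_i)$ and the denominator $\sigma(z_i)^{2-1}=\sigma(z_i)$, so $\Omega_{\e_i}=1$; the case $\mathbf v=\e_i+\e_j$ is the same one-line computation, with the denominator collapsing to $\sigma(z_i+z_j)$. For property 3, $\Omega_{\mathbf v}(\mathbf P)$ vanishes exactly when the numerator $\sigma(z_{\mathbf v})$ vanishes while the denominator does not, that is, precisely when $z_{\mathbf v}\in\Lambda$, which is the statement $\mathbf v\cdot\mathbf P=\mathcal{O}$; the vectors excluded in property 2 are exactly those for which a denominator factor also vanishes, which is why they must be set aside. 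The heart of the matter is property 1. Here I would invoke the three-term Weierstrass sigma identity
\[
\sigma(a+b)\sigma(a-b)\sigma(c+d)\sigma(c-d)+\sigma(b+c)\sigma(b-c)\sigma(a+d)\sigma(a-d)+\sigma(c+a)\sigma(c-a)\sigma(b+d)\sigma(b-d)=0,
\]
and specialise it with $a=z_{p+s/2}$, $b=z_{q+s/2}$, $c=z_{r+s/2}$, $d=z_{s/2}$ (half-integral vectors cause no trouble, as $\sigma$ is entire). Then $a+b=z_{p+q+s}$, $a-d=z_p$, $c-d=z_r$, and so on, so the three summands reproduce exactly the three products of net values appearing in \eqref{eqn: ellrec}. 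Dividing the identity through by the common normalising factor yields the recurrence for $\Omega_{\mathbf v}$.

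Finally I would transfer the construction to an arbitrary field $K$. The key observation is that $\Omega_{\mathbf v}$, now known to be rational on $E^n$, can be generated from the initial values of property 2 using only the recurrence of property 1, each application of which involves nothing but ring operations together with division by previously computed values; the only denominators that actually occur are the factors $x_i-x_j$, corresponding analytically to $\wp(z_i)-\wp(z_j)=-\sigma(z_i+z_j)\sigma(z_i-z_j)/\big(\sigma(z_i)^2\sigma(z_j)^2\big)$. Tracking these shows that each $\Psi_{\mathbf v}$ lies in the stated localisation of $\Z[\alpha_1,\ldots,\alpha_6][x_i,y_i]/\langle f(x_i,y_i)\rangle$, and that properties 1--3 are universal polynomial identities in the $\alpha_i$ and the coordinates with integer coefficients. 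Any such identity, once verified over $\mathbb{C}$, remains valid after specialising the $\alpha_i$ to the coefficients of $E/K$ and reducing modulo the characteristic, which gives the theorem for arbitrary $K$. I expect the two genuine obstacles to be (i) the bookkeeping in property 1 --- verifying that the normalising factors attached to the three summands of \eqref{eqn: ellrec} agree, so that a single factor can be divided out of the sigma identity uniformly --- and (ii) the integrality claim, namely controlling precisely which denominators arise, so as to land in the prescribed localisation rather than a larger one.
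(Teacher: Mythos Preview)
The paper does not contain a proof of this theorem at all: it is quoted as a background result from Stange's earlier work, with citations to \cite{Sta4} and \cite{Sta1}, and the paper proceeds immediately to the definition following it. So there is no ``paper's own proof'' to compare against.

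That said, your sketch is essentially the proof given in the cited references. The construction of $\Omega_{\mathbf v}$ as a ratio of sigma functions with exponents chosen to force ellipticity in each variable, the verification of the normalisation (property~2) by direct substitution, the vanishing statement (property~3) via the simple zero of $\sigma$ at lattice points, and the derivation of the recurrence (property~1) from the three-term sigma identity specialised at half-integral linear combinations --- all of this is exactly Stange's approach. Your identification of the two genuine obstacles is also accurate: the bookkeeping that the normalising denominators attached to each of the three summands of the recurrence coincide (so a single factor can be divided out), and the control of denominators needed to land in the stated localisation rather than a larger ring of rational functions on $E^n$. Both are handled in the cited works by explicit computation, and your outline would carry through with that level of care.
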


In the case of rank $n=1$, the $\Psi_\mathbf{v}$ are the familiar \emph{division polynomials} of an elliptic curve \cite[p. 105]{Sil1}.
Since the $\Psi_\mathbf{v}$ satisfy the elliptic net recurrence \eqref{eqn: ellrec}, we may make the following definition.

\begin{definition}[{Stange \cite[Def. 6.1]{Sta4}\cite[Def. 7.2.1]{Sta1}}]
For any elliptic curve $E$ defined over $K$ and non-zero points $P_1, \ldots, P_n \in E(K)$ such that no two are equal or inverses (or, if $n=1$, $P_1$ is not a $2$- or $3$-torsion point), the map $W_{E,P_1, \ldots, P_n}: \Z^n \rightarrow K$ defined by $$W_{E,P_1,\ldots, P_n}(\mathbf{v}) = \Psi_\mathbf{v}(P_1, \ldots, P_n)$$ is an elliptic net called the \emph{elliptic net associated to $E, P_1, \ldots, P_n$}.
\end{definition}

Nearly all elliptic nets arise in this way (see \cite{Sta4}\cite{Sta1}).  For the remainder of this article, any elliptic net or elliptic divisibility sequence will be assumed to have this form.

Elliptic nets or elliptic divisibility sequences are arrays or sequences of values of $K$.  The zeroes in this array are particularly important.

\begin{definition}
The zeroes of an elliptic divisibility sequence or elliptic net appear as a sublattice of the lattice of indices.  We call this sublattice the \emph{lattice of zero-apparition}.  In the case of a sequence, this sublattice is specified by a single positive integer -- the smallest positive index of a vanishing term -- and this number is called the \emph{rank of zero-apparition}.
\end{definition}

The rank of zero-apparition of an elliptic divisibility sequence associated to a point $P$ will equal the order of the point $P$.  In the case of an array associated to points $P_1, \ldots, P_n$, the zeroes $(v_1, \ldots, v_n)$ correspond to linear combinations $\mathbf{v} \cdot \mathbf{P}$ that vanish.  %Although the zeroes in an elliptic divisibility sequence appear regularly at a specific interval, that interval is not always a period for the sequence. 

Suppose $T: \Z^s \rightarrow \Z^t$ is a $\Z$-linear transformation.  The following theorem relates the elliptic net associated to $\mathbf{P} \in E^s$ to that associated to $T(\mathbf{P}) \in E^t$.

\begin{theorem}[{Stange \cite[Prop. 5.6]{Sta4}\cite[Thm. 6.2.3]{Sta1}}]
\label{thm: equiv}
Let $T$ be any $t \times s$ integral matrix.  Let $\mathbf{P} \in E^s$ and $\mathbf{v} \in \Z^t$.  Then
\begin{multline}
\label{eqn: equiv}
W_{E,\mathbf{P}}(T^{tr}(\mathbf{v})) = W_{E,T(\mathbf{P})}(\mathbf{v}) \\ \times \prod_{i=1}^{t}W_{E,\mathbf{P}}(T^{tr}(\mathbf{e}_i))^{v_i^2 - v_i\left(\sum_{j\neq i}v_j\right)}
\prod_{1 \leq i < j \leq t} W_{E,\mathbf{P}}(T^{tr}(\mathbf{e}_i+\mathbf{e}_j))^{v_iv_j}
\end{multline}
\end{theorem}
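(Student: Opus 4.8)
The plan is to establish \eqref{eqn: equiv} by means of the uniqueness of an elliptic net. Regard both sides as functions $\Z^t\to K$ of $\mathbf{v}$, say $L(\mathbf{v})$ and $R(\mathbf{v})$ (all terms being taken wherever they are defined, i.e. for $\mathbf{P}$ such that $T(\mathbf{P})$ is an admissible tuple of points). I would show that $L$ and $R$ are both elliptic nets of rank $t$, and then invoke the standard fact — Ward \cite{War} in rank one, Stange \cite{Sta1} in general — that a rank-$t$ elliptic net is determined by the recurrence \eqref{eqn: ellrec} together with its values on a fixed finite set $S\subset\Z^t$ of small index vectors; it then suffices to check $L|_S=R|_S$.

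Both ``net'' claims are easy. For $L=W_{E,\mathbf{P}}\circ T^{tr}$: since $T^{tr}$ is $\Z$-linear it carries every index expression occurring in \eqref{eqn: ellrec} — each a sum or difference of $p,q,r,s\in\Z^t$ — to the analogous expression in $T^{tr}p,\ldots,T^{tr}s\in\Z^s$, so the rank-$t$ recurrence for $L$ at $(p,q,r,s)$ is precisely the rank-$s$ recurrence for $W_{E,\mathbf{P}}$ at $(T^{tr}p,\ldots,T^{tr}s)$, which vanishes; also $L(-\mathbf{v})=-L(\mathbf{v})$ and $L(\mathbf{0})=0$. (This generalizes the remark that restriction to a sublattice yields an elliptic net.) For $R$: put $a_i:=W_{E,\mathbf{P}}(T^{tr}\mathbf{e}_i)$ and $b_{ij}:=W_{E,\mathbf{P}}(T^{tr}(\mathbf{e}_i+\mathbf{e}_j))$, so that $R=W_{E,T(\mathbf{P})}\cdot f$ with $f(\mathbf{v})=\prod_i a_i^{v_i^2-v_i\sum_{j\neq i}v_j}\prod_{i<j}b_{ij}^{v_iv_j}$. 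The exponent in $f$ is a homogeneous quadratic form, and for any such $f$ (a \emph{scaling function}) the product $W\cdot f$ again satisfies \eqref{eqn: ellrec} whenever $W$ does: \eqref{eqn: ellrec} is a cyclic sum of three terms each homogeneous of degree $4$ in the values of $W$, and the quadratic-form identity $Q(p+q+s)+Q(p-q)+Q(r+s)+Q(r)=2(Q(p)+Q(q)+Q(r)+Q(s))+B(p,s)+B(q,s)+B(r,s)$ (with $B$ the bilinear form of $Q$) shows $f$ scales all three terms by the same factor; evenness of $Q$ gives the remaining net axioms. Finally, the exponents in $f$ are arranged precisely so that $f(\mathbf{e}_i)=a_i$ and $f(\mathbf{e}_i+\mathbf{e}_j)=b_{ij}$, and since $W_{E,T(\mathbf{P})}$ is normalized this already yields $R(\mathbf{e}_i)=a_i=L(\mathbf{e}_i)$ and $R(\mathbf{e}_i+\mathbf{e}_j)=b_{ij}=L(\mathbf{e}_i+\mathbf{e}_j)$ — equivalently, $L/f$ is the normalized elliptic net scale-equivalent to $L$.

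The remaining step — checking $L=R$ on the rest of $S$, a small explicit family of vectors beyond the $\mathbf{e}_i$ and $\mathbf{e}_i+\mathbf{e}_j$ (such as $2\mathbf{e}_i$, $2\mathbf{e}_i+\mathbf{e}_j$, $\mathbf{e}_i+\mathbf{e}_j+\mathbf{e}_k$) — is the crux, and I expect it to be the main obstacle: abstractly many normalized rank-$t$ nets exist, so agreement on the normalization vectors alone does not pin down $L/f$, and the geometry of $E$ must be brought in. The cleanest way I see is to read \eqref{eqn: equiv} as an identity of rational functions on $E^s$ in the coordinates of $\mathbf{P}$: the left side is the net polynomial $\Psi_{T^{tr}\mathbf{v}}$, the right side is $(\Psi_{\mathbf{v}}\circ T)$ times an explicit product of the $\Psi_{T^{tr}\mathbf{e}_i}$ and $\Psi_{T^{tr}(\mathbf{e}_i+\mathbf{e}_j)}$, and one compares divisors using the known divisor formula for net polynomials (Stange \cite{Sta4}\cite{Sta1}) together with the pullback along $T\colon E^s\to E^t$; once the divisors are seen to agree the two sides differ by a nonzero scalar, which is fixed to $1$ by specializing $\mathbf{v}=\mathbf{e}_1$, at which value both sides equal $\Psi_{T^{tr}\mathbf{e}_1}$. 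The main difficulty on this route is the divisor bookkeeping — tracking how the structure of $\operatorname{div}\Psi_{\mathbf{v}}$ (a combination of torsion translates of diagonal divisors) transforms under the linear substitution $T^{tr}$ so as to match the pulled-back divisor plus the divisors of the correction factors. Alternatively one can stay with the recurrence and restrict $W_{E,\mathbf{P}}$ to the low-rank sublattices of $\Z^s$ spanned by the relevant columns of $T^{tr}$, reducing the matching of each vector of $S$ to a low-rank instance that one settles from the defining properties of the $\Psi_{\mathbf{v}}$.
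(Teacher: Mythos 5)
Your two ``easy'' halves are fine: $W_{E,\mathbf{P}}\circ T^{tr}$ satisfies \eqref{eqn: ellrec} because $T^{tr}$ is $\Z$-linear, and multiplying a net by $k\cdot f(\mathbf{v})$ with $f$ an exponential of a quadratic form again satisfies \eqref{eqn: ellrec}. But the proposal has a genuine gap exactly where you locate the crux. Agreement of $L$ and $R$ at $\mathbf{e}_i$ and $\mathbf{e}_i+\mathbf{e}_j$ carries essentially no content --- the correction factor is built to make it automatic --- and the whole theorem lives in the verification at the remaining determining vectors ($2\mathbf{e}_i$, $2\mathbf{e}_i+\mathbf{e}_j$, $\mathbf{e}_i+\mathbf{e}_j+\mathbf{e}_k$, \dots), which you do not carry out. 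Moreover, the route you propose for that step is flawed as described: comparing divisors on $E^s$ for a \emph{fixed} $\mathbf{v}$ can at best show the two sides agree up to a scalar $c(\mathbf{v})\in K^*$ that is constant in $\mathbf{P}$ but a priori depends on $\mathbf{v}$; specializing $\mathbf{v}=\mathbf{e}_1$ only proves $c(\mathbf{e}_1)=1$ and says nothing about other $\mathbf{v}$. To pin the constant you would need to specialize the point $\mathbf{P}$ (or run an induction on $\mathbf{v}$ through the recurrence), and you would also need nondegeneracy hypotheses (non-vanishing of $W_{E,\mathbf{P}}(T^{tr}\mathbf{e}_i)$, $W_{E,\mathbf{P}}(T^{tr}(\mathbf{e}_i+\mathbf{e}_j))$, admissibility of $T(\mathbf{P})$) for any determination-by-initial-values principle to apply; the theorem as cited has no such restriction, so the degenerate cases must be handled separately, e.g.\ by a continuity/specialization argument from the generic case.

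For comparison: this paper does not prove the statement at all --- it quotes it from Stange \cite[Prop.\ 5.6]{Sta4}\cite[Thm.\ 6.2.3]{Sta1}. There the identity is obtained by a direct computation with the explicit analytic definition of the net functions, $\Psi_{\mathbf{v}}$ being $\sigma(v_1z_1+\cdots+v_nz_n)$ divided by a product of $\sigma(z_i)$'s and $\sigma(z_i+z_j)$'s raised to explicit quadratic exponents in $\mathbf{v}$: substituting the linear change of variables $\mathbf{z}\mapsto$ (variables for $T(\mathbf{P})$) makes every sigma factor and every exponent match term by term, so the identity holds over $\mathbb{C}$ exactly (not merely up to a constant), and it then descends to arbitrary fields because both sides are universal rational expressions with integer coefficients in the Weierstrass coefficients and coordinates. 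That computation is precisely the content your sketch defers, so as it stands the proposal does not constitute a proof.
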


This has several useful corollaries.  For proofs see the cited references.

\begin{theorem}[{Ward \cite[Thm. 8.1]{War}, Stange \cite[Thm. 10.2.3]{Sta1}\cite{Sta5}}]
\label{thm: perrank1}
Suppose that $W_{E,P}(m) = 0$. Then for all $l, v \in \Z$, we have
\[
W_{E,P}(lm+v) = W_{E,P}(v) a^{vl}b^{l^2}
\]
where
\[
a = \frac{W_{E,P}(m+2)}{W_{E,P}(m+1)W_{E,P}(2)}, \qquad
b = \frac{W_{E,P}(m+1)^2W_{E,P}(2)}{W_{E,P}(m+2)}.
\]
Furthermore, $a^m = b^2$.  Therefore, there exists an $\alpha \in \bar K$, the algebraic closure of $K$, such that $\alpha^2 = a$ and $\alpha^m = b$, and so
\[
W_{E,P}(lm+v) = W_{E,P}(v) \alpha^{(lm+v)^2-v^2}.
\]
\end{theorem}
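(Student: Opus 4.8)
The plan is to establish the three assertions in turn --- first the case $l=1$, i.e. $W_{E,P}(n+m)=a^n b\,W_{E,P}(n)$ for all $n\in\Z$; then $a^m=b^2$; then the general formula and its reformulation via $\alpha$. Write $W=W_{E,P}$. Since $P$ is neither a $2$- nor a $3$-torsion point, the rank of zero-apparition (which equals $\operatorname{ord}(P)$) is at least $4$, so none of $W(2)$, $W(m-2)$, $W(m-1)$, $W(m+1)$, $W(m+2)$, $W(m+3)$ is zero; in particular $a$ and $b$ are well-defined and nonzero.

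The heart of the argument is the case $l=1$. I would prove $W(n+m)=a^n b\,W(n)$ by induction outward from the window $-2\le n\le 3$, using the two specializations of the elliptic divisibility sequence recurrence $W(x+2)W(x-2)=W(x+1)W(x-1)W(2)^2-W(3)W(x)^2$ and $W(x+3)W(x-3)=W(x+1)W(x-1)W(3)^2-W(4)W(2)W(x)^2$. On the base window the identity holds for $n=0,1,2$ directly from $W(0)=0$, $W(1)=1$ and the definitions of $a$ and $b$; for $n=3,-1,-2$ one first solves for $W(m+3)$, $W(m-1)$, $W(m-2)$ using the first recurrence at $x=m+2$, $m+1$, $m$ respectively (each time the hypothesis $W(m)=0$ kills a term, giving $W(m+3)W(m+1)W(2)^2=W(3)W(m+2)^2$, $W(m+3)W(m-1)=-W(3)W(m+1)^2$, $W(m+2)W(m-2)=W(m+1)W(m-1)W(2)^2$), and one then checks that these closed forms agree with $a^3b\,W(3)$, $a^{-1}b\,W(-1)$, $a^{-2}b\,W(-2)$. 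For the inductive step, assuming the identity for all $|j|\le N$ with $N\ge3$, apply the first recurrence at $x=m+N-1$, substitute the inductive hypothesis into the right-hand side, factor out $a^{2N-2}b^2$, and recognize the remaining bracket as the right-hand side of that same recurrence for the unshifted sequence at $x=N-1$; this gives $W(m+N+1)W(m+N-3)=a^{2N-2}b^2\,W(N+1)W(N-3)$, and dividing by $W(m+N-3)=a^{N-3}b\,W(N-3)$ gives $W(m+N+1)=a^{N+1}b\,W(N+1)$. The negative direction is identical. The one subtlety is that $W(m+N-3)$ --- equivalently $W(N-3)$ --- vanishes exactly when $N-3$ is a multiple of the rank of zero-apparition (which already happens at $N=3$); in those steps I would instead use the second recurrence at $x=m+N-2$, dividing by $W(m+N-5)$, and $W(N-3)$ and $W(N-5)$ cannot vanish simultaneously since the rank of zero-apparition exceeds $2$.

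Once the $l=1$ case is known, $a^m=b^2$ follows by combining it with $W(-x)=-W(x)$: rewriting the quasi-periodicity as $W(x-m)=a^{m-x}b^{-1}W(x)$ and computing $W(-m-1)$ both as $-W(m+1)=-ab$ and as $a^{m+1}b^{-1}W(-1)=-a^{m+1}b^{-1}$ forces $b^2=a^m$. The general formula is then an induction on $l\ge0$: $W((l{+}1)m+v)=a^{lm+v}b\,W(lm+v)=a^{lm+v}b\cdot W(v)a^{vl}b^{l^2}=W(v)\,a^{v(l+1)}(a^m)^l b^{l^2+1}=W(v)\,a^{v(l+1)}b^{(l+1)^2}$, where the last step uses $a^m=b^2$; negative $l$ reduces to positive $l$ via $W(-x)=-W(x)$. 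Finally, a square root $\alpha\in\bar K$ of $a$ satisfies $\alpha^{2m}=a^m=b^2$, so after possibly replacing $\alpha$ by $-\alpha$ we may take $\alpha^m=b$, and substituting $a=\alpha^2$, $b=\alpha^m$ and collecting exponents rewrites $W(v)a^{vl}b^{l^2}$ as $W(v)\alpha^{(lm+v)^2-v^2}$.

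I expect the main obstacle to be the $l=1$ case: the patient bookkeeping on the base window, and especially the division-by-zero case analysis needed to make the induction step run uniformly. Stages two and three are then short algebraic manipulations. (One should also be able to obtain the $l=1$ quasi-periodicity more conceptually from the transformation theorem, Theorem~\ref{thm: equiv}, applied to an auxiliary rank-two net $W_{E,P,R}$ under a unipotent base change that fixes $(P,R)$ because $[m]P=\mathcal{O}$, but the inductive argument is self-contained and uses only the recurrence.)
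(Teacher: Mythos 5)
Your core argument is correct, and it is a genuinely different route from the paper's: the paper gives no proof at all here (it defers to Ward and to Stange, the intended derivation being via the transformation result, Theorem~\ref{thm: equiv}), whereas you give a self-contained Ward-style induction from the two recurrence specializations. I checked your base window $-2\le n\le 3$, the inductive step with its $a^{2N-2}b^2$ factorization, the second-recurrence detour when $W(N-3)=0$, the derivation of $a^m=b^2$ from $W(-m-1)$, and the induction on $l$; all of this is sound, and the nonvanishing claims you need follow from $\operatorname{ord}(P)\ge 4$ as you say.

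The final step, however, genuinely fails, in two ways. First, the sign-flip argument for arranging $\alpha^m=b$ only works for odd $m$: if $m$ is even then $(-\alpha)^m=\alpha^m=(\alpha^2)^{m/2}=a^{m/2}$ is forced, and $a^m=b^2$ only gives $a^{m/2}=\pm b$. This is not a removable technicality: for $E\colon y^2=x^3+1$ over $\F_5$ and $P=(2,2)$ of order $m=6$ one finds $W(7)=4$, $W(8)=2$, $W(2)=4$, hence $a=b=2$, and every $\alpha$ with $\alpha^2=2$ has $\alpha^6=3\neq b$, so no such $\alpha$ exists. Second, even when such an $\alpha$ exists, ``collecting exponents'' does not yield the displayed formula: substituting $a=\alpha^2$, $b=\alpha^m$ into $a^{vl}b^{l^2}$ gives $\alpha^{2vl+ml^2}$, whereas $(lm+v)^2-v^2=l^2m^2+2lmv=m(2vl+ml^2)$, so the claimed exponent is $m$ times the one you actually obtain. (Concretely, reducing the sequence of $y^2+y=x^3-x$, $P=(0,0)$ modulo $7$ gives $m=9$, $a=4$, $b=6$, and $\alpha=5$ does satisfy $\alpha^2=a$, $\alpha^9=b$, yet $W(10)=3$ while $\alpha^{10^2-1}=5^{99}=6$.) The consistent version --- and the one the paper's later construction of $\phi(P)$ actually uses, under the hypothesis $\gcd(m,q-1)=1$ --- is the existence of $\alpha$ with $\alpha^{2m}=a$ and $\alpha^{m^2}=b$, from which $W(lm+v)=W(v)\alpha^{(lm+v)^2-v^2}$ does follow. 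So the defect is partly in the statement as printed, but your write-up asserts the false exponent identity as an algebraic step and uses a parity-sensitive existence argument without restriction on $m$; both points need to be flagged and repaired rather than reproduced.
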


\begin{theorem}[{Stange \cite[Thm. 10.2.3]{Sta1}\cite{Sta5}}]
\label{thm: perrankn}
Suppose $\mathbf{r} = (r_1,r_2) \in \Z^2$ is such that \\$W_{E,P,Q}(\mathbf{r})~=~0$.
For $l \in \Z$ and $\mathbf{v} = (v_1,v_2) \in \Z^2$
we have
\[
W_{E,P,Q}(l\mathbf{r}+\mathbf{v}) = W_{E,P,Q}(\mathbf{v}) 
%f_{l\mathbf{r}}(v_1, v_2) = 
a_\mathbf{r}^{lv_1}b_\mathbf{r}^{lv_2}c_\mathbf{r}^{l^2}
\]
where
\[ a_\mathbf{r} = \frac{W(r_1+2,r_2)}{W(r_1+1,r_2)w(2,0)}, \; b_\mathbf{r} = \frac{W(r_1,r_2+2)}{W(r_1,r_2+1)W(0,2)}, \; c_\mathbf{r} = \frac{W(r_1+1,r_2+1)}{a_\mathbf{r}b_\mathbf{r}W(1,1)}.
\]
\end{theorem}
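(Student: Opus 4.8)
The plan is to extract two consequences from the elliptic net recurrence \eqref{eqn: ellrec} (in rank $2$), each collapsing to something usable precisely because $W(\mathbf{r}) = 0$: one controlling how $W(l\mathbf{r}+\mathbf{v})$ depends on $l$, and one controlling its dependence on $\mathbf{v}$ through the ratio $\mu(\mathbf{v}) := W(\mathbf{r}+\mathbf{v})/W(\mathbf{v})$, where $W = W_{E,P,Q}$. Given both, the stated closed form follows by elementary bookkeeping.

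First I would specialize the four vectors $(p,q,r,s)$ in \eqref{eqn: ellrec} to $(l\mathbf{r}+\mathbf{v},\ \mathbf{r},\ \mathbf{t},\ \mathbf{0})$ with $\mathbf{t}$ free. The summand carrying the factor $W(q)^2 = W(\mathbf{r})^2 = 0$ drops, leaving
\[
W\bigl((l+1)\mathbf{r}+\mathbf{v}\bigr)\,W\bigl((l-1)\mathbf{r}+\mathbf{v}\bigr) = \lambda\, W(l\mathbf{r}+\mathbf{v})^2, \qquad \lambda := -\,\frac{W(\mathbf{r}+\mathbf{t})\,W(\mathbf{r}-\mathbf{t})}{W(\mathbf{t})^2},
\]
the left side being free of $\mathbf{t}$, whence $\lambda$ is independent of $\mathbf{t}$; taking $\mathbf{t} = \mathbf{e}_1$ (so $W(\mathbf{t}) = 1$) gives $\lambda = -W(r_1+1,r_2)W(r_1-1,r_2)$. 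This quadratic recursion has unique solution $W(l\mathbf{r}+\mathbf{v}) = W(\mathbf{v})\,\mu(\mathbf{v})^{l}\,\lambda^{l(l-1)/2}$ whenever $W(\mathbf{v}) \neq 0$; when $W(\mathbf{v}) = 0$ the relations $\mathbf{v}\cdot\mathbf{P} = \mathcal{O}$ and $\mathbf{r}\cdot\mathbf{P} = \mathcal{O}$ force $(l\mathbf{r}+\mathbf{v})\cdot\mathbf{P} = \mathcal{O}$, so both sides of the theorem vanish. It therefore remains to identify $\mu(\mathbf{v}) = a_\mathbf{r}^{v_1}b_\mathbf{r}^{v_2}c_\mathbf{r}$ and $\lambda = c_\mathbf{r}^2$.

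Next I would specialize $(p,q,r,s)$ in \eqref{eqn: ellrec} to $(\mathbf{r}+\mathbf{e}_1,\ \mathbf{r}+\mathbf{v},\ \mathbf{r},\ \mathbf{e}_1-\mathbf{r})$. Now the summand carrying $W(r) = W(\mathbf{r}) = 0$ drops, and computing the arguments of the remaining two summands (each collapses to something short, e.g.\ $p+s = 2\mathbf{e}_1$, $q-r = \mathbf{v}$, $r-p = -\mathbf{e}_1$, $r+p+s = \mathbf{r}+2\mathbf{e}_1$) leaves
\[
W(\mathbf{r}+\mathbf{v}+\mathbf{e}_1)\,W(\mathbf{v})\,W(2,0)\,W(r_1+1,r_2) = W(r_1+2,r_2)\,W(\mathbf{v}+\mathbf{e}_1)\,W(\mathbf{r}+\mathbf{v}),
\]
that is, $\mu(\mathbf{v}+\mathbf{e}_1) = a_\mathbf{r}\,\mu(\mathbf{v})$ --- the ratio $W(r_1+2,r_2)/\bigl(W(r_1+1,r_2)W(2,0)\bigr)$ being exactly the definition of $a_\mathbf{r}$ --- and symmetrically $\mu(\mathbf{v}+\mathbf{e}_2) = b_\mathbf{r}\,\mu(\mathbf{v})$. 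Iterating out from $\mathbf{e}_1$, where $\mu(\mathbf{e}_1) = W(r_1+1,r_2)$, gives $\mu(\mathbf{v}) = a_\mathbf{r}^{v_1-1}b_\mathbf{r}^{v_2}W(r_1+1,r_2)$. Taking $\mathbf{v}=\mathbf{e}_1$ in the $\mathbf{e}_2$-shift gives $W(r_1+1,r_2+1) = b_\mathbf{r}W(r_1+1,r_2)$, which with $c_\mathbf{r} = W(r_1+1,r_2+1)/\bigl(a_\mathbf{r}b_\mathbf{r}W(1,1)\bigr)$ and $W(1,1)=1$ yields $W(r_1+1,r_2) = a_\mathbf{r}c_\mathbf{r}$, hence $\mu(\mathbf{v}) = a_\mathbf{r}^{v_1}b_\mathbf{r}^{v_2}c_\mathbf{r}$. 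Finally $\lambda = \mu(\mathbf{v})\mu(-\mathbf{v})$ by the $\mathbf{t} = \mathbf{v}$ instance of the formula for $\lambda$ above, so evaluating at $\mathbf{v} = \mathbf{e}_1+\mathbf{e}_2$ gives $\lambda = (a_\mathbf{r}b_\mathbf{r}c_\mathbf{r})(a_\mathbf{r}^{-1}b_\mathbf{r}^{-1}c_\mathbf{r}) = c_\mathbf{r}^2$. Substituting into the solution of the recursion, $W(l\mathbf{r}+\mathbf{v}) = W(\mathbf{v})(a_\mathbf{r}^{v_1}b_\mathbf{r}^{v_2}c_\mathbf{r})^{l}(c_\mathbf{r}^2)^{l(l-1)/2} = W(\mathbf{v})\,a_\mathbf{r}^{lv_1}b_\mathbf{r}^{lv_2}c_\mathbf{r}^{l^2}$.

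The inventive point --- hence the main obstacle --- is finding the two specializations of \eqref{eqn: ellrec}, above all the second, where the choice $s = \mathbf{e}_1-\mathbf{r}$, $p = \mathbf{r}+\mathbf{e}_1$, $q = \mathbf{r}+\mathbf{v}$ is rigged so that $W(\mathbf{r})$ kills one summand while the surviving two reassemble exactly the three consecutive translates appearing in the definition of $a_\mathbf{r}$. A secondary, routine matter is the appearance of denominators: passing from the product identities to the relations for $\mu$ needs the relevant values of $W$ nonzero (e.g.\ $W(2,0)\neq 0$, which needs $\operatorname{ord}(P) > 2$), and the iteration building $\mu$ must be routed around the lattice of zero-apparition; but each of the final identities, once cleared of denominators, is a polynomial relation among values of $W$ valid generically, hence valid wherever the standing non-degeneracy hypotheses on $P$ and $Q$ hold, which is the setting of the theorem.
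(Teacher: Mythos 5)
Your proposal is correct in substance, and note that the paper itself contains no proof of this statement to compare against: Theorem~\ref{thm: perrankn} is quoted from Stange's thesis with the remark ``for proofs see the cited references.'' Judged on its own, your derivation checks out. With $(p,q,r,s)=(l\mathbf{r}+\mathbf{v},\,\mathbf{r},\,\mathbf{t},\,\mathbf{0})$ in \eqref{eqn: ellrec} the third summand carries $W(\mathbf{r})^2$ and drops, giving your quadratic recursion with $\lambda=-W(\mathbf{r}+\mathbf{t})W(\mathbf{r}-\mathbf{t})/W(\mathbf{t})^2$, independent of $\mathbf{t}$ because all $W(l\mathbf{r}+\mathbf{v})\neq 0$ when $W(\mathbf{v})\neq 0$; with $(p,q,r,s)=(\mathbf{r}+\mathbf{e}_1,\,\mathbf{r}+\mathbf{v},\,\mathbf{r},\,\mathbf{e}_1-\mathbf{r})$ the first summand carries $W(\mathbf{r})$, and using $W(-\mathbf{e}_1)=-1$ the surviving two terms give exactly $\mu(\mathbf{v}+\mathbf{e}_1)=a_\mathbf{r}\mu(\mathbf{v})$ in cleared form, valid for every $\mathbf{v}$. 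The identifications $W(r_1+1,r_2)=a_\mathbf{r}c_\mathbf{r}$, $\lambda=\mu(\mathbf{v})\mu(-\mathbf{v})=c_\mathbf{r}^2$ (the sign coming from $W(-\mathbf{v})=-W(\mathbf{v})$), the closed form $\mu^{l}\lambda^{l(l-1)/2}=a_\mathbf{r}^{lv_1}b_\mathbf{r}^{lv_2}c_\mathbf{r}^{l^2}$, and the degenerate case $W(\mathbf{v})=0$ (both sides vanish by the vanishing criterion) are all handled correctly. This is the same general strategy as the cited source (establish the one-step translation factor, then iterate), but your two specializations make it self-contained from the recurrence.

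The one soft spot is your closing genericity remark. ``Valid generically'' cannot mean generically on $E^2$: the entire statement is conditional on the non-generic hypothesis $W(\mathbf{r})=0$, so one would have to argue generically on the one-dimensional locus $r_1P+r_2Q=\mathcal{O}$, component by component, and then specialize --- more delicate than your sentence suggests. Fortunately you do not need genericity at all. Under the standing hypotheses ($P,Q\neq\mathcal{O}$, $P\neq\pm Q$) the lattice of zero-apparition $\Lambda$ contains no vector of $\ell^\infty$-norm $1$, hence distinct zeros lie at $\ell^\infty$-distance at least $2$; every zero is therefore an isolated vertex of the grid $\Z^2$ all eight of whose neighbours are non-zeros, so any unit-step path from $\mathbf{e}_1$ to a given $\mathbf{v}\notin\Lambda$ can be detoured around the zeros, and your shift relations (which hold for all $\mathbf{v}$ once denominators are cleared, with the fixed denominators $W(2,0)$, $W(0,2)$, $W(\mathbf{r}+\mathbf{e}_i)$ nonzero exactly because $a_\mathbf{r},b_\mathbf{r},c_\mathbf{r}$ are defined) chain multiplicatively along such a path. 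The special evaluations you need at $\pm(\mathbf{e}_1+\mathbf{e}_2)$ only involve the excluded short vectors, so they are safe in any case. With that replacement for the genericity appeal, the proof is complete.
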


\section{Perfectly Periodic Sequences and Nets}
\label{sec: perfper}
\begin{definition}  An elliptic divisibility sequence is called \emph{perfectly periodic} if it is periodic with respect to its rank of zero-apparition.  An elliptic net is called \emph{perfectly periodic} if it is periodic with respect to its lattice of zero-apparition.
\end{definition}

%We will often put a tilde over a sequence $\widetilde W(k)$ to remind the reader that it is perfectly periodic.

\begin{definition}  Let $f: \Z^n \rightarrow K^*$ be a quadratic function, and $k \in K^*$ a constant.  Two elliptic nets $W$ and $W'$ are called \emph{equivalent} if $W'(\mathbf{v}) = kf(\mathbf{v})W(\mathbf{v})$.
\end{definition}

As an example, let $W$ be an elliptic divisibility sequence with rank of zero-apparition $m$.  In one variable ($n=1$), quadratic functions to $K^*$ have the form $f(n) = \alpha^{n^2}$ for some $\alpha \in K^*$.  Suppose we use $\alpha$ as defined by Theorem \ref{thm: perrank1}, i.e. $\alpha^2 = a, \alpha^m = b$, and let take $k = \alpha^{-1}$.  Then $W'(n) = \alpha^{n^2-1}W(n)$, and this sequence is perfectly periodic.  Suppose that $K=\F_q$ and $\gcd(q-1,m)=1$.  In this case the conditions of Theorem \ref{thm: perrank1} determine such an $\alpha$ uniquely, and it lies in $K$.  Otherwise (if $\gcd(q-1,m) \neq 1$), two such $\alpha$'s will exist, equal up to sign.  The two resulting perfectly periodic sequences will be equal at even-indexed locations and equal up to sign at odd-indexed locations.  

The moral of the last paragraph is that any elliptic divisibility sequence is equivalent to a perfectly periodic one.  We can give an explicit expression for such a perfectly periodic sequence.

\begin{theorem}
\label{thm: perfper}
Let $K$ be a finite field of $q$ elements, and $E$ an elliptic curve defined over $K$.  For all points $P \in E$ of order relatively prime to $q-1$ and greater than $3$, define
\begin{equation}
\label{eqn: phip}
\phi(P) = \left( \frac{W_{E,P}(q-1)}{W_{E,P}(q-1+\operatorname{ord}(P))} \right)^{\frac{1}{\operatorname{ord}(P)^2}}.
\end{equation}
For a point $P$ of order relatively prime to $q-1$ and greater than $3$, the sequence $\phi([n]P)$ is a perfectly periodic elliptic divisibility sequence equivalent to $W_{E,P}(n)$.  Specifically,
\begin{equation}
\label{eqn: perfper}
\phi([n]P) = \phi(P)^{n^2-1}W_{E,P}(n).
\end{equation}
More generally, let $\mathbf{P} \in E(K)^n$ be a collection of nonzero points, no two equal or inverses, and all elements of a single cyclic group and having order greater than $3$.  The $n$-array $\phi(\mathbf{v} \cdot \mathbf{P})$ (as $\mathbf{v}$ ranges over $\mathbb{Z}^n$) forms a perfectly periodic elliptic net equivalent to $W_{E,\mathbf{P}}(\mathbf{v})$.  Specifically,
\[
\phi(\mathbf{v} \cdot \mathbf{P}) = W_{E,\mathbf{P}}(\mathbf{v}) \prod_{i=1}^n \phi(P_i)^{v_i^2 - v_i\left(\sum_{j\neq i}v_j\right)}\prod_{1 \leq i < j \leq n} \phi(P_i + P_j)^{v_iv_j}.
\]
\end{theorem}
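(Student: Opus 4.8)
First I would prove the single-variable identity \eqref{eqn: perfper}, read off from it that $n\mapsto\phi([n]P)$ is a perfectly periodic elliptic divisibility sequence equivalent to $W_{E,P}$, and then obtain the $n$-array statement by pushing everything through a generator of the cyclic group via the transformation law \eqref{eqn: equiv}. Write $m=\operatorname{ord}(P)$. Since $q$ is odd, $\gcd(m,q-1)=1$ forces $m$ odd; the rank of zero-apparition of $W_{E,P}$ equals $m$, and $m$ divides neither $q-1$ nor $q-1+m$, so $W_{E,P}(q-1),W_{E,P}(q-1+m)\in\F_q^*$, and because $\gcd(m^2,q-1)=1$ the $m^2$-th power map is a bijection of $\F_q^*$; hence the quotient in \eqref{eqn: phip} has a unique $m^2$-th root there, so $\phi(P)\in\F_q^*$ is well defined, and likewise $\phi$ is defined at every point of $\langle P\rangle$ of order $>3$ (whose order divides $m$, hence is coprime to $q-1$). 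Granting \eqref{eqn: perfper}, the sequence $n\mapsto\phi([n]P)=\phi(P)^{n^2}W_{E,P}(n)$ is $W_{E,P}(n)$ times the quadratic function $n\mapsto\phi(P)^{n^2}$, so it is equivalent to $W_{E,P}$ — and a rescaling of an elliptic net by a quadratic function is again an elliptic net, by the one-line observation that all three summands of \eqref{eqn: ellrec} pick up the same scalar; this sequence retains the rank of zero-apparition $m$, and perfect periodicity is the single identity $\phi([n+m]P)=\phi([n]P)$, i.e.\ $\phi(P)^{(n+m)^2-n^2}=W_{E,P}(n)/W_{E,P}(n+m)$, which is the $\ell=1$ case of Theorem~\ref{thm: perrank1} once $\phi(P)$ is identified with the inverse of its period constant. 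So it remains to prove \eqref{eqn: perfper} and its $n$-array analogue.

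To prove \eqref{eqn: perfper}, I would apply Theorem~\ref{thm: equiv} with the $1\times1$ matrix $T=[n]$: since $T(P)=[n]P$, $T^{tr}(\mathbf{e}_1)=n$, and the second product in \eqref{eqn: equiv} is empty, it collapses to $W_{E,[n]P}(v)=W_{E,P}(nv)\,W_{E,P}(n)^{-v^2}$ for all $v\in\Z$. If $m\mid n$ then $W_{E,P}(n)=0$ and both sides of \eqref{eqn: perfper} vanish; if $\operatorname{ord}([n]P)\le3$, read \eqref{eqn: perfper} as the definition of the perfectly periodic value; so assume $\mu:=\operatorname{ord}([n]P)>3$, whence $\mu\mid m$, $\mu$ is odd, and $\gcd(\mu,q-1)=1$. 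By Theorem~\ref{thm: perrank1} there is a constant $\beta$ — lying in $\F_q^*$ since $\beta^2,\beta^{\mu}\in\F_q^*$ and $\mu$ is odd — with $W_{E,[n]P}(\ell\mu+v)=W_{E,[n]P}(v)\,\beta^{(\ell\mu+v)^2-v^2}$. Feeding in the displayed relation, and using that $n\mu$ is a multiple of $m$ so that Theorem~\ref{thm: perrank1} for $W_{E,P}$ (period constant $\alpha\in\F_q^*$) also evaluates $W_{E,P}(n(\ell\mu+v))$, one obtains after collecting the power of the nonzero term $W_{E,P}(n)$ that
\[
\bigl(\beta\,\alpha^{-n^2}\,W_{E,P}(n)\bigr)^{(\ell\mu+v)^2-v^2}=1\qquad\text{for all }\ell\text{ and all }v\text{ not divisible by }\mu.
\]
The exponents at $(\ell,v)=(1,1)$ and $(1,2)$ are $\mu(\mu+2)$ and $\mu(\mu+4)$, with gcd $\mu$; since $\gcd(\mu,q-1)=1$ this forces $\beta=\alpha^{n^2}W_{E,P}(n)^{-1}$. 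Finally $W_{E,[n]P}(q-1)/W_{E,[n]P}(q-1+\mu)=\beta^{-\mu^2-2\mu(q-1)}=\beta^{-\mu^2}$ by Fermat's little theorem in $\F_q^*$, so extracting the $\mu^2$-th root gives $\phi([n]P)=\beta^{-1}=\alpha^{-n^2}W_{E,P}(n)$; taking $n=1$ reads $\phi(P)=\alpha^{-1}$, and \eqref{eqn: perfper} follows.

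For the $n$-array identity, I would let $G$ generate the cyclic group containing $P_1,\dots,P_n$, write $P_i=[a_i]G$ and $w_{\mathbf v}=\sum_i a_iv_i$, so that $\mathbf v\cdot\mathbf P=[w_{\mathbf v}]G$; here $\operatorname{ord}(G)>3$ is coprime to $q-1$, and $W_{E,G}(a_i),W_{E,G}(a_i+a_j)\neq0$ because $P_i,P_i+P_j\neq\mathcal O$ makes none of $a_i,a_i+a_j$ divisible by $\operatorname{ord}(G)$. Applying Theorem~\ref{thm: equiv} with the $n\times1$ matrix $S$ whose $i$-th entry is $a_i$ — so $S(G)=\mathbf P$, $S^{tr}(\mathbf{e}_i)=a_i$, $S^{tr}(\mathbf{e}_i+\mathbf{e}_j)=a_i+a_j$, $S^{tr}(\mathbf v)=w_{\mathbf v}$ — identity \eqref{eqn: equiv} rearranges to
\[
W_{E,\mathbf P}(\mathbf v)=W_{E,G}(w_{\mathbf v})\prod_i W_{E,G}(a_i)^{-\left(v_i^2-v_i\sum_{j\neq i}v_j\right)}\prod_{i<j}W_{E,G}(a_i+a_j)^{-v_iv_j}.
\]
Multiplying by $\prod_i\phi(P_i)^{v_i^2-v_i\sum_{j\neq i}v_j}\prod_{i<j}\phi(P_i+P_j)^{v_iv_j}$ and substituting $\phi(P_i)/W_{E,G}(a_i)=\phi(G)^{a_i^2}$, $\phi(P_i+P_j)/W_{E,G}(a_i+a_j)=\phi(G)^{(a_i+a_j)^2}$ from \eqref{eqn: perfper} applied to $G$ (which one may take as the definition of $\phi$ at a point of order $\le3$), the right-hand side of the claimed identity becomes $W_{E,G}(w_{\mathbf v})\,\phi(G)^{E}$ with $E=\sum_i a_i^2\left(v_i^2-v_i\sum_{j\neq i}v_j\right)+\sum_{i<j}(a_i+a_j)^2v_iv_j$; the elementary identity $E=\bigl(\sum_i a_iv_i\bigr)^2=w_{\mathbf v}^2$ reduces it to $W_{E,G}(w_{\mathbf v})\,\phi(G)^{w_{\mathbf v}^2}=\phi([w_{\mathbf v}]G)=\phi(\mathbf v\cdot\mathbf P)$, using \eqref{eqn: perfper} for $G$ once more. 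Equivalence to $W_{E,\mathbf P}$ holds as before, and perfect periodicity follows since $\mathbf v\mapsto\phi([w_{\mathbf v}]G)$ is periodic modulo $\{\mathbf v:w_{\mathbf v}\equiv0\bmod\operatorname{ord}(G)\}$, which is precisely the lattice of zero-apparition of $W_{E,\mathbf P}$.

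The step I expect to be the real work is the single-variable one: pinning down the period constant $\beta$ of $W_{E,[n]P}$ as $\alpha^{n^2}W_{E,P}(n)^{-1}$, which requires matching the periodicity laws of $W_{E,[n]P}$ and $W_{E,P}$ and then a divisibility argument to remove the a priori free exponent. The hypothesis $\gcd(\operatorname{ord},q-1)=1$ will be used decisively there: it makes every order in sight odd and coprime to $q-1$, hence the $m^2$-type roots defining $\phi$ and the $\mu$-th root used to isolate $\beta$ unique in $\F_q^*$, which is exactly what dissolves the sign ambiguity noted in Section~\ref{sec: perfper}. (Alternatively one can obtain perfect periodicity of the $n$-array from Theorem~\ref{thm: perrankn} and its higher-rank analogue, but passing to a single generator avoids this.) The remaining ingredients — the rescaling check on \eqref{eqn: ellrec}, Fermat in $\F_q^*$, and the polynomial identity $E=w_{\mathbf v}^2$ — are routine.
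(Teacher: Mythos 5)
Your argument is correct in substance, but it takes a genuinely different route from the paper's. For the rank-one identity the paper never touches periodicity constants: since $[q-1]P=[q-1+m]P$, the two sequences $W_{E,[q-1]P}$ and $W_{E,[q-1+m]P}$ coincide, and expressing each via Theorem \ref{thm: equiv} (with the $1\times 1$ matrices $(q-1)$ and $(q-1+m)$, after the symmetric identity $W_{E,[n]P}(l)W_{E,P}(n)^{l^2}=W_{E,P}(nl)$) and rearranging gives the formula in three lines, Fermat entering only as $x^{q-1}=1$ for known nonzero quantities. You instead identify the periodicity constant $\beta$ of $W_{E,[n]P}$ as $\alpha^{n^2}W_{E,P}(n)^{-1}$ via Theorem \ref{thm: perrank1} plus a gcd-of-exponents argument, and then evaluate $\phi([n]P)=\beta^{-1}$. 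This is heavier, but it buys the conceptually useful identification $\phi(P)=\alpha^{-1}$ (implicit in the paper, cf.\ Corollary \ref{cor: perfperorder}) and it treats $\gcd(n,m)>1$ and the degenerate orders of $[n]P$, which the paper passes over in silence. For the rank-$n$ statement the paper again compares $l=q-1$ and $l=q-1+m$ in a combined instance of Theorem \ref{thm: equiv}, whereas you reduce to rank one through a generator $G$ of the cyclic group at the cost of the (correct) quadratic-form identity $\sum_i a_i^2\bigl(v_i^2-v_i\sum_{j\neq i}v_j\bigr)+\sum_{i<j}(a_i+a_j)^2v_iv_j=\bigl(\sum_i a_iv_i\bigr)^2$; both are valid, yours being longer but more explicit about the points $P_i+P_j$ of small order. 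One caution: Theorem \ref{thm: perrank1} as printed is not self-consistent (with $\alpha^2=a$, $\alpha^m=b$ the exponent in its final display should be $((lm+v)^2-v^2)/m$, equivalently one wants $\alpha^{2m}=a$, $\alpha^{m^2}=b$); your argument only needs the existence of a unique $\beta\in\F_q^*$ satisfying the quadratic periodicity law, which does hold, but you should note this rather than quote the statement at face value.

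The one real blemish is at the end of your rank-one argument: what you derive is $\phi([n]P)=\phi(P)^{n^2}W_{E,P}(n)$, and you then assert that \eqref{eqn: perfper} ``follows,'' although \eqref{eqn: perfper} as printed has exponent $n^2-1$; the two differ by the factor $\phi(P)$. Setting $n=1$ in \eqref{eqn: perfper} would force $\phi(P)=1$, and the rank-$n$ display of the theorem at $\mathbf{v}=2\mathbf{e}_1$ gives exponent $4$, so your $n^2$ version is the correct one and the printed $n^2-1$ is an off-by-one slip in the paper (carrying out the paper's own rearrangement also produces $n^2$); indeed you quietly use the $n^2$ form both in your opening paragraph and in the rank-$n$ reduction. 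The mathematics is therefore right, but you must flag the discrepancy explicitly instead of citing \eqref{eqn: perfper} as though it matched your formula. Nothing downstream is damaged: in \eqref{eqn: 2perfper} the extra factor cancels in the ratio, and in Propositions \ref{prop: prod1} and \ref{prop: qr} it contributes only a known constant.
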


\begin{proof}
The proof uses Theorem \ref{thm: equiv}.  We will demonstrate the method of proof in the rank one case before proceeding to the general case.  Take $T = (l)$, so
$$W_{E,[l]P}(n)W_{E,P}(l)^{n^2} = W_{E,P}(nl).$$
By symmetry,
$$W_{E,[n]P}(l)W_{E,P}(n)^{l^2} = W_{E,P}(nl).$$
Let $m = \operatorname{ord}(P)$.  Thus, combining the above and using $l = q-1$ and $q-1+m$ in turn,
\begin{eqnarray*}
\frac{W_{E,[n]P}(q-1)W_{E,P}(n)^{(q-1)^2}}
{W_{E,P}(q-1)^{n^2}}&=& W_{E,[q-1]P}(n) = W_{E,[q-1+m]P}(n) \\
&=& \frac{W_{E,[n]P}(q-1+m)W_{E,P}(n)^{(q-1+m)^2}}
{W_{E,P}(q-1+m)^{n^2}}
\end{eqnarray*}
Rearranging, 
\[
\phi([n]P) = \phi(P)^{n^2-1}W_{E,P}(n).
\]
Therefore, $\phi([n]P)$ is an elliptic divisibility sequence.  By definition, $\phi([n]P)$ has period $\operatorname{ord}(P)$ which is equal to the rank of apparition of $W_{E,P}$ and $\phi([n]P)$.  So $\phi([n]P)$ is perfectly periodic.

For the rank $n$ case, let $m$ be the order of the cyclic group containing all the points under consideration.  In Theorem \ref{thm: equiv}, let $t=1$ and $s=n$ and
take $T = ( v_1 \quad v_2\quad  v_3 \quad \cdots \quad v_n )$ to obtain
\[
W_{E,\mathbf{P}}(l\mathbf{v}) = W_{E,\mathbf{v} \cdot \mathbf{P}}(l) W_{E,\mathbf{P}}(\mathbf{v})^{l^2}.
\]
Now take $t=s=n$ in Theorem \ref{thm: equiv} , and $T = l\rm{Id}_n$ to obtain
\[
W_{E,\mathbf{P}}(l\mathbf{v}) = W_{E,l\mathbf{P}}(\mathbf{v})  \prod_{i=1}^n W_{E,\mathbf{P}}(le_i)^{v_i^2 - v_i(\sum_{j\neq i}v_j)}\prod_{1 \leq i < j \leq n} W_{E,\mathbf{P}}(le_i + le_j)^{v_iv_j}. 
\]
Note that
\[
W_{E,\mathbf{P}}(le_i) = W_{E,P_i}(l), \qquad W_{E,\mathbf{P}}(le_i + le_j) = W_{E,P_i+P_j}(l).
\]
Combining the above, we have
\[
W_{E,l\mathbf{P}}(\mathbf{v})  = \frac{
 W_{E,\mathbf{v} \cdot \mathbf{P}}(l) W_{E,\mathbf{P}}(\mathbf{v})^{l^2} 
 }{
   \prod_{i=1}^n W_{E,P_i}(l)^{v_i^2 - v_i(\sum_{j\neq i}v_j)}\prod_{1 \leq i < j \leq n} W_{E,P_i+P_j}(l)^{v_iv_j}
   }. 
\]
Comparing this in the case of $l = q-1$ and $l=q-1+m$ gives the required result, as before. \end{proof}

In light of this theorem we will use the convenient notation
\[
\widetilde{W}_{E,P}(n) = \phi([n]P).
\]
and call this the \emph{perfectly periodic elliptic divisibility sequence associated to $E$ and $P$}.  The attractive property of a perfectly periodic sequence is formula \eqref{eqn: phip}:  $\widetilde{W}_{E,P}(n)$ can be calculated \emph{as a function of the point $[n]P$ on the curve} without knowledge of $n$.

\begin{corollary}
\label{cor: perfperorder}
Suppose that $E$ is an elliptic curve over a field $K = \Fq$ and $P \in E(K)$ is of order $m \geq 4$.  The period of the sequence $W_{E,P}$ is $m\operatorname{ord}_{K^*}(\phi(P))$.
\end{corollary}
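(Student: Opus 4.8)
The plan is to read the period of $W_{E,P}$ off the explicit formula of Theorem~\ref{thm: perfper}, which ties it to the perfectly periodic sequence $\widetilde{W}_{E,P}(n)=\phi([n]P)$.  Write $m=\operatorname{ord}(P)$ and $e=\operatorname{ord}_{K^*}(\phi(P))$; since $\phi(P)$ is only defined (as in Theorem~\ref{thm: perfper}) when $\gcd(m,q-1)=1$, that hypothesis is in force, and together with $\operatorname{char}K\neq 2$ it forces $m$ to be odd.  Rearranging~\eqref{eqn: perfper} gives
\[
W_{E,P}(n)=\phi(P)^{\,1-n^2}\,\widetilde{W}_{E,P}(n),
\]
which exhibits $W_{E,P}$ as the pointwise product of the ``quadratic exponential'' sequence $n\mapsto\phi(P)^{1-n^2}$ with $\widetilde{W}_{E,P}$.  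The latter, being perfectly periodic with rank of zero-apparition $m$, has minimal period exactly $m$, and since $\phi(P)\neq 0$ the vanishing locus of $W_{E,P}$ is exactly the set of multiples of $m$.

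First I would argue that every period $N$ of $W_{E,P}$ is a multiple of $m$, by evaluating $W_{E,P}(N)=W_{E,P}(0)=0$.  Conversely, for $N$ a multiple of $m$ we have $\widetilde{W}_{E,P}(n+N)=\widetilde{W}_{E,P}(n)$ for all $n$, so $W_{E,P}(n+N)=W_{E,P}(n)$ holds automatically at multiples of $m$ (both sides vanish) and at the remaining indices is equivalent to $\phi(P)^{(n+N)^2-n^2}=1$, i.e.\ to $e\mid 2nN+N^2$.  Because $m\geq 4$, the set of non-multiples of $m$ contains both $1$ and $2$; subtracting the corresponding divisibilities shows that requiring $e\mid 2nN+N^2$ for all non-multiples $n$ of $m$ is equivalent to requiring it for all integers $n$, i.e.\ to the statement that $n\mapsto\phi(P)^{n^2}$ has period dividing $N$.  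Hence the period of $W_{E,P}$ equals $\operatorname{lcm}(m,p)$, where $p$ is the minimal period of $n\mapsto\phi(P)^{n^2}$ in $K^*$.  It then remains to evaluate $p$ --- the admissible $N$ are precisely those with $e\mid\gcd(2N,N^2)=N\gcd(2,N)$ --- obtaining $p=\operatorname{ord}_{K^*}(\phi(P))$, and to note that $\phi(P)\in K^*=\F_q^*$ gives $e\mid q-1$, so that $\gcd(m,e)$ divides $\gcd(m,q-1)=1$ and $\operatorname{lcm}(m,p)=m\operatorname{ord}_{K^*}(\phi(P))$, as claimed.

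I expect the main obstacle to be this last numerical step: correctly pinning down $p$ requires careful book-keeping at the prime $2$ inside $\gcd(2N,N^2)$ --- which is exactly where $\operatorname{char}K\neq 2$, and hence the oddness of $m$, is genuinely used --- together with the observation that the zeros of $\widetilde{W}_{E,P}$ do not let the period of the product $W_{E,P}$ drop below $\operatorname{lcm}(m,p)$, which is why one needs $\operatorname{ord}(P)\geq 4$ rather than merely $\geq 2$.  Everything else is a direct unwinding of Theorems~\ref{thm: perrank1} and~\ref{thm: perfper}.
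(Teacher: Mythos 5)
Your reduction is, up to its last step, exactly the computation that the paper's terse proof leaves implicit: the zeros force $m \mid N$, consecutive non-multiples of $m$ (available since $m\geq 4$) give $e \mid 2N$ and hence $e \mid N^2$ where $e=\operatorname{ord}_{K^*}(\phi(P))$, the set of periods of $n\mapsto \phi(P)^{n^2}$ is generated by its minimal period $p$, and $\gcd(m,p)=1$ yields $\operatorname{lcm}(m,p)=mp$. The genuine gap is the step you flag and then assert rather than carry out: the evaluation $p=e$. Your own criterion, $e \mid N\gcd(2,N)$, gives $p=e$ only when $e\not\equiv 0 \pmod 4$. If $4\mid e$, then $N=e/2$ already satisfies $e\mid 2N$ and $e\mid N^2$ (since $N^2=e\cdot(e/4)$ with $e/4\in\Z$), so $p=e/2$ and your argument outputs minimal period $me/2$, not $me$. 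Nothing in the hypotheses excludes this case: $\gcd(m,q-1)=1$ and odd characteristic constrain $m$ (forcing it odd), but $e$ divides the even number $q-1$ and can perfectly well be divisible by $4$; the ``book-keeping at the prime $2$'' you defer is precisely where the claim breaks.

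Moreover this is not a repairable slip in your write-up, because the asserted value of the period is wrong in that case. For $E: y^2=x^3+x+1$ over $\F_5$ and $P=(0,1)$ of order $m=9$ (coprime to $q-1=4$), the sequence reduces to $0,1,2,4,4,3,2,4,3,0,2,1,3,2,1,1,3,4,0,\ldots$, which has minimal period $18$, while $\phi(P)=\left(W_{E,P}(4)/W_{E,P}(13)\right)^{1/81}=4/2=2$ has order $4$ in $\F_5^*$, so $m\operatorname{ord}_{K^*}(\phi(P))=36$. So the identification $p=e$ fails exactly when $4\mid e$, and your proof (like the paper's one-line ``the result now follows from equation \eqref{eqn: perfper}'', which glosses over the same $2$-adic point) does not establish the stated formula; to close the argument one must either add the hypothesis $\operatorname{ord}_{K^*}(\phi(P))\not\equiv 0\pmod 4$ or conclude instead that the period is $mp$ with $p=e$ when $4\nmid e$ and $p=e/2$ when $4\mid e$. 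The remainder of your argument (the $m\mid N$ step, the passage to $e\mid 2N$ and $e\mid N^2$, and $\operatorname{lcm}(m,p)=mp$ from $\gcd(m,p)=1$) is correct and matches the paper's intended route.
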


\begin{proof}
First, $\phi([n]P)$ has period exactly $m$.  Since, if the period were $m'<m$, then $W_{E,P}(m') = 0$, a contradiction.  The result now follows from equation \eqref{eqn: perfper}.
\end{proof}

The ratio between the period and the rank of zero-apparition, which we've demonstrated to be $\operatorname{ord}_{K^*}(\phi(P))$, is called $\tau$ by Morgan Ward \cite[Thm. 11.1]{War}.

\section{The Hard Problems}
\label{sec: hardprobs}

As we have seen, elliptic nets are closely related to the points on an elliptic curve.  In this section, we will see specifically how to compute them, and how they relate, algorithmically, to the points.

The choice of segment $0~<~k~<~\operatorname{ord}(P)$ is not crucial in Problem \ref{prob: seqass} (EDS Association):  it could be restated for any segment $ i~\operatorname{ord}(P)~<~k~<~(i~+~1)~\operatorname{ord}(P)$.  This problem is trivial for a perfectly periodic sequence or net (since $\widetilde W(k) = \phi(Q)$ is computable in log $q$ time).  For the non-perfectly periodic case, the problem appears to be much harder.  As for Problem \ref{prob: edsdlp} (EDS Discrete Log), on the other hand, for non-perfectly periodic elliptic divisibility sequences, it can be solved by computing an $\F_q^*$ discrete log.  For this problem, it is the case of perfect periodicity that seems very difficult.

We will see that these hard problems are related according to the following diagram.

\[
\xymatrix@R=5.5pc @C=4.5pc{
 {\begin{array}{l} \mbox{\small perfectly}\\ \mbox{\small periodic} \end{array}} & [k]P \ar@<-0.7ex>[dl]_{(\log q)^3} \ar@{-->}[dr]^{\substack{EDS\\ Association}} \ar@{-->}[dd]^{ECDLP}&  {\begin{array}{l} \mbox{\small not perfectly}\\ \mbox{\small periodic} \end{array}} \\
{ \left\{ \phi([i]P) \right\}_{i=k}^{k+2} } \ar@{-->}[dr]_{\substack{Width\mbox{ }3 \\ EDS\mbox{ }Discrete\mbox{ }Log}} \ar@<-0.7ex>[ur]_{(\log q)^4} & & \left\{ W_{E,P}(i) \right\}_{i=k}^{k+2} \ar@<0.7ex>[dl]^{\F_q^* DLP}\\
& k \ar@<0.7ex>[ur]^{(\log q)^3}  & 
}
\]

We demonstrate the complexity of solving the problems associated to the solid lines in the following series of theorems.  The solid line labelled $\Fq^*$DLP has the complexity of a discrete logarithm problem in $\Fq^*$ (this is sub-exponential by index calculus).  No sub-exponential algorithms are known for the dotted lines.

Since our concern is polynomial time vs. non-polynomial time, in the following we assume naive arithmetic in $\Fq$, i.e. we bound the time to do basic $\Fq$ operations by $O((\log q)^2)$ for simplicity.

\begin{lemma}
\label{lemma: qxcoord}
Let $E$ be an elliptic curve defined over $K$, and $P \in E(K)$ be a point of order not less than $4$.  The $x$-coordinate of $[n]P$, $x([n]P)$, can be calculated in $O((\log q)^2)$ time from the three terms $W_{E,P}(n-1)$, $W_{E,P}(n)$, and $W_{E,P}(n+1)$ or from the three terms $\widetilde{W}_{E,P}(n-1)$, $\widetilde{W}_{E,P}(n)$, and $\widetilde{W}_{E,P}(n+1)$.
\end{lemma}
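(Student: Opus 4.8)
The plan is to recall the standard formula expressing the $x$-coordinate of $[n]P$ in terms of division polynomials, and then observe that it is invariant under the equivalence that relates $W_{E,P}$ to $\widetilde{W}_{E,P}$. Concretely, for an elliptic curve in the Weierstrass form used in the principal theorem, one has
\[
x([n]P) = x(P) - \frac{\Psi_{n-1}(P)\,\Psi_{n+1}(P)}{\Psi_n(P)^2} = x(P) - \frac{W_{E,P}(n-1)\,W_{E,P}(n+1)}{W_{E,P}(n)^2}.
\]
This is the classical relation between division polynomials and multiplication-by-$n$ (see \cite[p.~105]{Sil1} and \cite{War}); since $P$ has order at least $4$, none of $W_{E,P}(n-1), W_{E,P}(n), W_{E,P}(n+1)$ vanishes for $0 < n < \operatorname{ord}(P)$, so the expression is well-defined. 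First I would record this formula, noting that $x(P)$ is a fixed element of $K$ computable once and for all, and that the right-hand side requires only a constant number of $\F_q$ multiplications, one inversion, and one subtraction, hence runs in $O((\log q)^2)$ time under the naive-arithmetic convention.

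Next I would treat the perfectly periodic case. By Theorem~\ref{thm: perfper}, $\widetilde{W}_{E,P}(n) = \phi([n]P) = \phi(P)^{n^2-1} W_{E,P}(n)$, so
\[
\frac{\widetilde{W}_{E,P}(n-1)\,\widetilde{W}_{E,P}(n+1)}{\widetilde{W}_{E,P}(n)^2}
= \phi(P)^{(n-1)^2 - 1 + (n+1)^2 - 1 - 2(n^2-1)}\cdot\frac{W_{E,P}(n-1)\,W_{E,P}(n+1)}{W_{E,P}(n)^2}.
\]
The exponent of $\phi(P)$ is $(n-1)^2 + (n+1)^2 - 2n^2 = 2$, not zero, so the two ratios differ by the factor $\phi(P)^2$. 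Hence the naive substitution of $\widetilde{W}$ for $W$ in the formula above does not literally give $x([n]P)$; instead I would handle this by working through the translation formula \eqref{eqn: equiv} of Theorem~\ref{thm: equiv} (equivalently, Theorem~\ref{thm: perrank1}) to recover the correct normalization. The cleanest route is to note that the quantity $\Psi_{n-1}\Psi_{n+1}/\Psi_n^2$ is itself, up to the universal factor coming from the $\e_i$ and $\e_i+\e_j$ values, an intrinsic function on the curve; for the perfectly periodic sequence one has $\widetilde{W}(2) = \phi([2]P)$ computable directly, and $\phi(P)^2 = \widetilde{W}(2)/W_{E,P}(2)$ is not available without $n$ — so instead I would use that $\phi(P)^{2} $ can be eliminated by also invoking $\widetilde{W}(1)=\phi(P)^{0}W_{E,P}(1)$ together with the recurrence at a fixed small index. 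In fact the simplest fix: since $x([n]P)$ depends only on the \emph{equivalence class} of the sequence evaluated at $n-1,n,n+1$ through a ratio that is quadratic-function-invariant only if the exponent vanishes, I would instead directly re-derive the $x$-coordinate formula for $\widetilde{W}$ from the geometric definition of $\phi$, using \eqref{eqn: phip}: $\widetilde{W}_{E,P}(n)=\bigl(W_{E,P}(q-1)/W_{E,P}(q-1+\operatorname{ord}P)\bigr)^{n^2/\operatorname{ord}(P)^2}\,W_{E,P}(n)$ evaluated as a function of the point, and the two extra ratios $\widetilde{W}(n\pm1)$ combine so that $x([n]P)$ is expressed with a corrected constant $c\cdot x(P)'$ absorbing the $\phi(P)^2$, where $c$ is a fixed field element computed from $\widetilde W$ at indices $0,1,2,3$ in $O((\log q)^2)$ time.

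The main obstacle, then, is precisely this normalization discrepancy in the perfectly periodic case: the naive division-polynomial formula is tailored to the specific normalization $\Psi_1 = \Psi_2 = 1$ (condition (2) of the principal theorem), which $W_{E,P}$ satisfies but $\widetilde{W}_{E,P}$ does not, and one must track the compensating power of $\phi(P)$ through the computation. Everything else — the well-definedness from $\operatorname{ord}(P)\ge 4$, and the $O((\log q)^2)$ running time, which is just a bounded number of multiplications and one inversion in $\F_q$ — is routine. I would therefore organize the proof as: (i) state and justify the classical formula for $W_{E,P}$; (ii) observe it gives the claimed complexity; (iii) compute the effect of the equivalence $W_{E,P}\leadsto\widetilde W_{E,P}$ on the relevant ratio, isolate the factor $\phi(P)^2$, and show it can be precomputed as a fixed element of $\F_q$ from finitely many terms of $\widetilde W_{E,P}$ (via Theorem~\ref{thm: perrank1} / Theorem~\ref{thm: perfper}), so that the corrected formula again runs in $O((\log q)^2)$ time.
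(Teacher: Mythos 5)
Your handling of the untilded terms is exactly the paper's own argument: identity \eqref{eqn: qxcoord} (cited from Stange's thesis) plus the observation that evaluating $x(P)-W_{E,P}(n-1)W_{E,P}(n+1)/W_{E,P}(n)^2$ costs a bounded number of $\F_q$ operations. Where you diverge is the tilde half, and your computation there is correct: since $\widetilde{W}_{E,P}(n)=\phi(P)^{n^2-1}W_{E,P}(n)$ and $(n-1)^2+(n+1)^2-2n^2=2$, one gets $\widetilde{W}_{E,P}(n-1)\widetilde{W}_{E,P}(n+1)/\widetilde{W}_{E,P}(n)^2=\phi(P)^2\left(x(P)-x([n]P)\right)$, i.e.\ the ratio computes the $x$-coordinate difference on the Weierstrass model rescaled by $\phi(P)$, not on $E$ itself. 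The paper's proof disposes of this case with the one-line claim that the left-hand side of \eqref{eqn: qxcoord} is invariant under equivalence; as your calculation shows, that is true only up to the constant factor $\phi(P)^2$, so the caveat you raise is genuine rather than an error on your part.

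The gap is in your repair. You assert that the correcting constant (essentially $\phi(P)^2$) is a fixed field element computable ``from $\widetilde W$ at indices $0,1,2,3$ in $O((\log q)^2)$ time,'' but those terms are not among the data of the lemma: the inputs are $E$, $P$ and the three consecutive terms at $n-1,n,n+1$, and producing $\widetilde{W}_{E,P}(2)=\phi([2]P)$, $\widetilde{W}_{E,P}(3)$, or $\phi(P)$ itself from $(E,P)$ via \eqref{eqn: phip} costs $O((\log q)^3)$, not $O((\log q)^2)$, by Theorem~\ref{thm: calcphi} (it requires sequence terms of index comparable to $q$). In passing, $\widetilde{W}_{E,P}(2)/W_{E,P}(2)=\phi(P)^{3}$, not $\phi(P)^{2}$, and that ratio has nothing to do with knowledge of $n$; the surrounding sentences about absorbing the factor into a corrected constant never specify an actual mechanism. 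Two clean ways to finish are: (a) treat $\phi(P)$ (hence $\phi(P)^2$) as a one-time precomputation attached to $(E,P)$, which is harmless in every use of the lemma since Theorems~\ref{thm: calcfromphi} and~\ref{thm: redtodlp} already spend $O((\log q)^3)$ and more; or (b) state the tilde half as recovering $x([n]P)$ only up to the fixed factor $\phi(P)^2$ (equivalently, exactly on the rescaled model), and remove the factor in the applications, where computing $\phi(P)$ is affordable.
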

\begin{proof}
See \cite[Lemma 6.2.2]{Sta1} for the following identity:
\begin{equation}
\label{eqn: qxcoord}
\frac{ W_{E,P}(n-1)W_{E,P}(n+1) }{ W_{E,P}(n)^2 } = x(P) - x([n]P).
\end{equation}
The left-hand side of \eqref{eqn: qxcoord} is invariant under equivalence, and so the same calculation applies if we put tilde's on the $W$'s.
\end{proof}

\begin{theorem}[{Shipsey \cite[Thm 3.4.1]{Shi}}]
\label{thm: shipsey}
Let $E$ be an elliptic curve over $K$, and $P \in E(K)$ a point of order not less than $4$.  Given a value $t$, the term $W_{E,P}(t)$ in the elliptic divisibility sequence associated to $E, P$ can be calculated in $O((\log t) (\log q)^2)$ time.
\end{theorem}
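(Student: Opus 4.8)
The plan is to use a double-and-add strategy on the index $t$, mirroring the standard fast exponentiation algorithm but carried out in the "space" of consecutive blocks of EDS terms rather than in a group. The key observation is that, because the elliptic net recurrence \eqref{eqn: ellrec} (in rank one, the elliptic divisibility sequence relation displayed in the introduction) is a quadratic recurrence relating terms at indices $n+m$, $n-m$, and nearby indices, one can express $W(2n-1), W(2n), W(2n+1), W(2n+2)$ and also $W(n+m), W(n+m+1)$ for small $m$ as fixed polynomial (indeed rational) expressions in a bounded window of terms $W(n-1), W(n), W(n+1), W(n+2)$. Concretely, specializing $p,q,r,s$ appropriately yields "doubling" formulas that take a window centred near $n$ to a window centred near $2n$, and "addition" formulas that advance a window near $n$ by one (or by a controlled small amount). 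First I would record these two sets of identities explicitly from \eqref{eqn: ellrec} (or cite \cite[\S 3]{Shi} / \cite{Sta1} where the rank-one \emph{block} formulas are worked out), being careful that the denominators appearing are among the already-computed terms and are nonzero under the hypothesis $\operatorname{ord}(P)\geq 4$ (so that $W(1)=W(2)=1$ and $W(3), W(4)\neq 0$, and no small-index term vanishes).

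Next I would set up the induction. Maintain a state consisting of a fixed-width block of consecutive terms, say $W(n-1),W(n),W(n+1),W(n+2)$, normalised using $W(1)=W(2)=1$ as the base case $n=1$. Reading the binary expansion of $t$ from the most significant bit down, at each step either apply the doubling map (passing from the block at $n$ to the block at $2n$) or the doubling-then-increment map (block at $n$ to block at $2n+1$), exactly as in binary exponentiation. After $\lceil \log_2 t\rceil$ steps the state is the block centred at $t$, from which $W_{E,P}(t)$ is read off directly. Each step is a bounded number of additions, multiplications, and divisions in $\F_q$, hence $O((\log q)^2)$ by the naive-arithmetic convention; there are $O(\log t)$ steps, giving the claimed $O((\log t)(\log q)^2)$ bound.

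The main obstacle, and the part requiring genuine care rather than routine bookkeeping, is verifying that the doubling and incrementing maps are always well-defined along the entire computation — that is, that the denominators occurring in the rational block-update formulas never vanish for the indices actually encountered. This is where the hypothesis $\operatorname{ord}(P) \geq 4$ is essential: the only way a term $W_{E,P}(j)$ vanishes is when $[j]P = \mathcal{O}$, i.e. when $\operatorname{ord}(P) \mid j$, so as long as the window indices stay away from multiples of $\operatorname{ord}(P)$ the updates are legitimate, and when the target index $t$ itself is not a multiple of $\operatorname{ord}(P)$ one can arrange the recursion (or a minor variant using a shifted or wider window) to avoid such indices in intermediate blocks as well. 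I would handle this by either invoking the known fact that the division-polynomial block recurrences of \cite{Shi} are valid precisely under $\operatorname{ord}(P)\geq 4$, or by a short direct argument bounding which indices the algorithm visits. The remaining verifications — that the specialised recurrences indeed produce the next block, and the arithmetic-cost accounting — are mechanical.
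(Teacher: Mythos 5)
Your overall strategy --- a double-and-add on the index $t$, carried out on a block of consecutive terms updated via specializations of the recurrence --- is exactly the paper's (and Shipsey's) approach. The genuine gap is in the block width and the resulting denominators. With a window of only four terms $W(n-1),\dots,W(n+2)$, the duplication formulas are not closed on the window: to get $W(2n)$ you first need $W(n-2)$, which you must recover from an instance such as $W(n+2)W(n-2)=W(n+1)W(n-1)W(2)^2-W(3)W(n)^2$, i.e.\ by dividing by $W(n+2)$ (similarly $W(2n+2)$ needs $W(n+3)$, obtained by dividing by $W(n-1)$). These denominators vanish whenever the relevant index is divisible by $\operatorname{ord}(P)$, and the indices visited by the double-and-add are dictated by the binary expansion of $t$, so you cannot in general ``arrange the recursion to avoid such indices''; moreover excluding $t$ divisible by $\operatorname{ord}(P)$ is a restriction the theorem does not make ($W_{E,P}(t)=0$ is a legitimate output). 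Two of your supporting claims are also false as stated: $W(2)=\Psi_2(P)=2y_P+\alpha_1x_P+\alpha_3$ is not $1$ in general, and $\operatorname{ord}(P)\geq 4$ does not force $W(4)\neq 0$ (if $\operatorname{ord}(P)=4$ then $W(4)=0$), so the nonvanishing bookkeeping you lean on does not go through.

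The missing idea, which is how the paper resolves this, is to take the block wide enough that no variable denominators ever occur: the block centred at $k$ consists of the eight terms $W(k-3),\dots,W(k+4)$, and this block is closed under both passing to the block at $2k$ and to the block at $2k+1$ using only the duplication formulas
\[
W(2i-1)=W(i+1)W(i-1)^3-W(i-2)W(i)^3,\quad
W(2i)=\frac{W(i)W(i+2)W(i-1)^2-W(i)W(i-2)W(i+1)^2}{W(2)},
\]
whose sole denominator is $W(2)$, nonzero precisely because $\operatorname{ord}(P)\geq 4$. Zeros of the sequence appearing inside a block are then harmless (they never sit in a denominator), so the algorithm is unconditional in $t$ and needs no case analysis; initialization just computes $W(2),W(3),W(4)$ from the coordinates of $P$ via the division-polynomial formulas, and the $O((\log t)(\log q)^2)$ count is as you describe. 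If you keep your four-term window, you must either prove the vanishing indices are avoidable (they are not, in general) or widen the block --- at which point you have reproduced the paper's proof.
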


\begin{proof}
For completeness, we give a simplified version of Shipsey's algorithm here.  Following Shipsey, denote by $\left< W_{E,P}(k) \right>$ the segment or \emph{block centred at $k$} of eight terms $W_{E,P}(k-3)$, $W_{E,P}(k-2)$, $\ldots$, $W_{E,P}(k+3)$, $W_{E,P}(k+4)$ of the sequence.  The block centred at $t$ can be calculated from the block centred at $1$ via a double-and-add algorithm based on an addition chain for $t$.  The calculation of the new block from the previous depends on two instances of the recurrence (one such calculation for each term of the new block):
\begin{align}
W(2i-1,0) &= W(i+1,0)W(i-1,0)^3- W(i-2,0)W(i,0)^3 \enspace , \notag \\
W(2i,0)&= \left(W(i,0)W(i+2,0)W(i-1,0)^2 \right. \notag \\ & \hspace{4em}\left. - W(i,0)W(i-2,0)W(i+1,0)^2\right)/W(2,0) \enspace . \notag
\end{align}
To begin we must calculate the block centred at 1.  Recalling that $W(0) = 0$, $W(1)=1$ and $W(-n) = -W(n)$, we must calculate $W(i)$ for $i=2,3,4$.  Precise formulae in terms of the coordinates of $P$ and the Weierstrass coefficients for $E$ can be found in \cite[p. 105]{Sil1} or for long Weierstrass equations in \cite[p. 80]{FreLan}.  This algorithm takes $O(\log t)$ steps, each of which involves a fixed number of $\Fq^*$ multiplications and additions, which take $O((\log q)^2)$ time at worst.
%\begin{align*}
%W(2) &= 2y \enspace , \qquad W(3) = 3x^4 + 6Ax^2 + 12Bx-A^2 \enspace ,\\
%W(4) &= 4y(x^6 + 5Ax^4 + 20Bx^3 - 5A^2x^2 - 4ABx - 8B^2 - A^3) \enspace .
%\end{align*}
\end{proof}

\begin{theorem}
\label{thm: calcphi}
Let $E$ be an elliptic curve over $\Fq$, and $P \in E(\Fq)$ a point of order relatively prime to $q-1$ and greater than $3$.  Given a point $Q = [k]P$, the term $\phi(Q) = \widetilde W_{E,P}(k)$ can be calculated in $O((\log q)^3)$ time without requiring knowledge of $k$.
\end{theorem}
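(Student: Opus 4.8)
\noindent\emph{Proof strategy.}
The plan is to reduce the statement to a single evaluation of $\phi$ at the point $Q$ and then to bound the cost of that evaluation. By Theorem~\ref{thm: perfper} we have $\widetilde{W}_{E,P}(k) = \phi([k]P) = \phi(Q)$, so it suffices to compute $\phi(Q) \in \Fq^*$ from the coordinates of $Q$ and the equation of $E$, in $O((\log q)^3)$ operations, without ever using $k$. Since $\operatorname{ord}(Q)$ divides $\operatorname{ord}(P)$ it is prime to $q-1$; assuming $\operatorname{ord}(Q) > 3$ (if $Q = \mathcal{O}$ then $\widetilde{W}_{E,P}(k) = 0$, detected at once, and for smaller order $\phi(Q)$ is not defined), $\phi(Q)$ is given by \eqref{eqn: phip} with $Q$ in place of $P$ and lies in $\Fq^*$, the $\operatorname{ord}(Q)^2$-th root being unique there because $\gcd(\operatorname{ord}(Q), q-1) = 1$. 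The obstacle is that \eqref{eqn: phip} uses $\operatorname{ord}(Q) = \operatorname{ord}(P)/\gcd(k, \operatorname{ord}(P))$ both as an index and as an exponent, which is exactly the kind of data we are forbidden to read off from $k$. The one substantive step is therefore to replace $\operatorname{ord}(Q)$ throughout by the known integer $m := \operatorname{ord}(P)$.

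To justify the replacement, apply \eqref{eqn: perfper} to the pair $(E, Q)$, which gives $\widetilde{W}_{E,Q}(j) = \phi(Q)^{j^2 - 1}\, W_{E,Q}(j)$ for every $j$, and use (Theorem~\ref{thm: perfper}) that $\widetilde{W}_{E,Q}$ has period $\operatorname{ord}(Q)$, hence period dividing $m$. Evaluating at $j = q-1$ and $j = q-1+m$ and dividing yields
\[
\frac{W_{E,Q}(q-1)}{W_{E,Q}(q-1+m)} = \phi(Q)^{(q-1+m)^2 - (q-1)^2} = \phi(Q)^{m^2 + 2m(q-1)} = \phi(Q)^{m^2},
\]
the last equality because $\phi(Q)^{q-1} = 1$; the numerator and denominator are nonzero since $\operatorname{ord}(Q)$, the rank of zero-apparition of $W_{E,Q}$, divides neither $q-1$ nor $q-1+m$. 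As $\gcd(m^2, q-1) = 1$, we recover $\phi(Q) = \left( W_{E,Q}(q-1)/W_{E,Q}(q-1+m) \right)^{e}$ with $e := (m^2)^{-1} \bmod (q-1)$, an identity whose right-hand side refers only to $E$, $Q$ and $\operatorname{ord}(P)$.

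For the complexity: compute the initial terms $W_{E,Q}(2), W_{E,Q}(3), W_{E,Q}(4)$ from the coordinates of $Q$ and the Weierstrass coefficients of $E$ via the explicit division-polynomial formulae, a bounded number of $\Fq$-operations, i.e. $O((\log q)^2)$; run Shipsey's algorithm (Theorem~\ref{thm: shipsey}) on the resulting block centred at $1$ to obtain $W_{E,Q}(q-1)$ and $W_{E,Q}(q-1+m)$, which costs $O((\log q)(\log q)^2) = O((\log q)^3)$ each because both indices are $O(q)$ by the Hasse bound; compute $e$ by the extended Euclidean algorithm, $O((\log q)^2)$; and finally compute $\left( W_{E,Q}(q-1)/W_{E,Q}(q-1+m) \right)^{e}$ by square-and-multiply, which is $O(\log q)$ multiplications in $\Fq$, i.e. $O((\log q)^3)$. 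The total is $O((\log q)^3)$, and $k$ was never used.

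I expect the only real difficulty to be the $\operatorname{ord}(Q)$-dependence of \eqref{eqn: phip}; the argument above shows that any known multiple of $\operatorname{ord}(Q)$ that is prime to $q-1$ and of size $O(q)$ will do in its place, and $m = \operatorname{ord}(P)$ is such a multiple and is one of the standing inputs in the problems of this paper. (If one declined to be handed $\operatorname{ord}(P)$, one could instead use the prime-to-$(q-1)$ part of $\#E(\Fq)$, at the cost of a point-counting precomputation that is polynomial, though of degree larger than $3$ in $\log q$.) Everything else is routine bookkeeping of exponents.
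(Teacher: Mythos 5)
Your proposal is correct, and its core is the same as the paper's: evaluate the defining formula \eqref{eqn: phip} at the point $Q$, computing the two needed sequence terms of $W_{E,Q}$ from the coordinates of $Q$ via Shipsey's algorithm (Theorem \ref{thm: shipsey}) and then inverting the square of the order modulo $q-1$, for a total cost of $O((\log q)^3)$. The one genuine difference is that the paper's proof uses $\operatorname{ord}(Q)$ itself (it computes $W_{E,Q}(q-1+\operatorname{ord}(Q))$ and inverts $\operatorname{ord}(Q)^2 \bmod (q-1)$), tacitly assuming $\operatorname{ord}(Q)$ is available; since $\operatorname{ord}(Q)=\operatorname{ord}(P)/\gcd(k,\operatorname{ord}(P))$, pinning it down without $k$ in general requires knowing the factorization of $\operatorname{ord}(P)$ (harmless when the order is prime, as in practice, but not literally free). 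You sidestep this by proving, from \eqref{eqn: perfper} applied to the pair $(E,Q)$ together with $\phi(Q)^{q-1}=1$ and the fact that $\operatorname{ord}(Q)$ divides $m=\operatorname{ord}(P)$, the identity $\phi(Q)=\bigl(W_{E,Q}(q-1)/W_{E,Q}(q-1+m)\bigr)^{(m^2)^{-1}\bmod (q-1)}$, which uses only the standing inputs $E$, $Q$, $m$; when $\operatorname{ord}(Q)=m$ this is exactly \eqref{eqn: phip}. So your route buys a cleaner "without knowledge of $k$" claim (and uniformity over all $Q\in\langle P\rangle$ of order greater than $3$) at the cost of one short extra derivation, while the paper's is more direct; your caveat that $Q=\mathcal{O}$ or $\operatorname{ord}(Q)\le 3$ must be excluded simply reflects the domain on which $\phi$ is defined and is implicit in the paper's proof as well. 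The complexity accounting (indices $O(q)$ by Hasse, extended Euclid, square-and-multiply) matches the paper's.
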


\begin{proof}
We use equation \eqref{eqn: phip}.  Using Theorem \ref{thm: shipsey} to calculate the ratio of terms inside the parentheses takes $\log (q-1+\operatorname{ord}(Q))+\log(q-1)$ steps.  Since $\operatorname{ord}(Q)$ is on the order of $q$, this is $O((\log q)^3)$ time at worst.  The other necessary operation in \eqref{eqn: phip} is to find the inverse of $\operatorname{ord}(Q)^2$ modulo $q-1$, and to raise to that exponent.  Both these are also $O(\log q)$ operations.
\end{proof}

\begin{theorem}
\label{thm: calcfromphi}
Let $E$ be an elliptic curve over $\Fq$, and $P \in E(\Fq)$ a point of order relatively prime to $q-1$ and greater than $3$.  Given the $\widetilde W_{E,P}(k)$, $\widetilde W_{E,P}(k+1)$ and $\widetilde W_{E,P}(k+2)$, the point $Q = [k]P$ can be calculated in probabilistic $O((\log q)^4)$ time without requiring knowledge of $k$.
\end{theorem}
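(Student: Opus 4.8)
The plan is to reconstruct $Q=[k]P$ in three steps, none of which uses the integer $k$: first recover the $x$-coordinate of the neighbouring point $[k+1]P$; then lift it to the pair of points of $E(\Fq)$ with that $x$-coordinate; finally select the correct lift using the function $\phi$ of Theorem~\ref{thm: calcphi}.

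For the first step, apply Lemma~\ref{lemma: qxcoord} to the given triple with the index shifted so that ``$n=k+1$''. Since the left side of \eqref{eqn: qxcoord} is invariant under equivalence, this gives
\[
x([k+1]P)=x(P)-\frac{\widetilde{W}_{E,P}(k)\,\widetilde{W}_{E,P}(k+2)}{\widetilde{W}_{E,P}(k+1)^{2}},
\]
computable in $O((\log q)^{2})$ time from $E$, $P$ and the three input terms. For the second step, substitute $x=x([k+1]P)$ into the Weierstrass equation of $E$ and solve the resulting quadratic for $y$; because $[k+1]P$ is a point with this $x$-coordinate, the quadratic has a root in $\Fq$, and since $\gcd(\operatorname{ord}(P),q-1)=1$ forces $\operatorname{ord}(P)$ odd (so $\langle P\rangle$ has no nontrivial $2$-torsion) there are in fact two roots, giving a point $R$ and its inverse $-R$, exactly one of which equals $[k+1]P$. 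Both $R$ and $-R$ lie in $\langle P\rangle$, so their orders divide $\operatorname{ord}(P)$ and are coprime to $q-1$. The only probabilistic ingredient is the square-root extraction needed here, which by Tonelli--Shanks costs $O((\log q)^{4})$ in the worst case; this is what accounts for the stated running time, since every other operation below is $O((\log q)^{3})$ or cheaper.

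For the third step, invoke Theorem~\ref{thm: calcphi} on the point $R$ to compute $\phi(R)$ in $O((\log q)^{3})$ time, using $E$ and the coordinates of $R$ only --- no exponent is needed. Writing $R=[a]P$, we have $\phi(R)=\widetilde{W}_{E,P}(a)$ and hence $\phi(-R)=-\phi(R)$; and $\widetilde{W}_{E,P}(k+1)=\phi([k+1]P)\neq 0$ because $[k+1]P\neq\mathcal{O}$. Therefore $\phi(R)$ and $-\phi(R)$ are distinct and exactly one of them equals $\widetilde{W}_{E,P}(k+1)$: if $\phi(R)=\widetilde{W}_{E,P}(k+1)$ then $[k+1]P=R$ and I output $Q=R-P$; otherwise $[k+1]P=-R$ and I output $Q=-R-P$. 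Each point operation is $O((\log q)^{2})$, so the whole algorithm is probabilistic $O((\log q)^{4})$ as claimed. One may additionally confirm $\phi(Q)=\widetilde{W}_{E,P}(k)$ and $\phi(Q+2P)=\widetilde{W}_{E,P}(k+2)$ via Theorem~\ref{thm: calcphi} as a consistency check.

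The crux is that Lemma~\ref{lemma: qxcoord} only returns an $x$-coordinate, which forces both a square-root computation (the source of the probabilistic running time) and the resolution of a sign ambiguity by a computation that must not leak $k$; the key observation making this work is that Theorem~\ref{thm: calcphi} computes $\phi$ directly from a point, and the identity $\phi(-R)=-\phi(R)$ makes the ambiguity resolution unambiguous. The degenerate cases $[k+1]P=\mathcal{O}$ (so $k\equiv-1$, $Q=-P$) and, when $\operatorname{ord}(P)$ is not prime, a lifted point of order $\leq 3$, should be handled separately; neither occurs when $\operatorname{ord}(P)$ is prime, the case of cryptographic interest.
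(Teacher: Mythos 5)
Your proposal is correct, and its skeleton is the same as the paper's: recover $x([k+1]P)$ from the three given terms via Lemma \ref{lemma: qxcoord} (the equivalence-invariance remark), extract the two square roots in probabilistic $O((\log q)^4)$ time, resolve the sign ambiguity, and output $[k+1]P - P$. The one genuine difference is how the ambiguity between $R$ and $-R$ is resolved. The paper stays entirely inside the sequence: assuming a candidate is $[k+1]P$, it uses the curve addition formula and \eqref{eqn: qxcoord} to produce $\widetilde W(k+3)$ and $\widetilde W(k+4)$, and then tests one instance of the EDS recurrence, justified by the fact that four consecutive terms determine the sequence; this needs only field operations and no further appeal to $\phi$. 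You instead evaluate $\phi$ at the candidate point via Theorem \ref{thm: calcphi} and exploit the antisymmetry $\phi(-R) = -\phi(R)$, comparing against the given $\widetilde W_{E,P}(k+1)$. That is arguably cleaner and costs only an extra $O((\log q)^3)$, but note two dependencies the paper's check avoids: (i) it requires $-1 \neq 1$, i.e.\ odd characteristic (your parity argument ``$\gcd(\operatorname{ord}(P),q-1)=1$ forces $\operatorname{ord}(P)$ odd'' likewise presumes $q$ odd), which is harmless given that Theorem \ref{mainthm} assumes characteristic $\neq 2$ but is not literally in the hypotheses of Theorem \ref{thm: calcfromphi}; and (ii) it invokes Theorem \ref{thm: calcphi} on $R$, which presupposes knowledge of $\operatorname{ord}(R)$ (automatic when $\operatorname{ord}(P)$ is prime, and consistent with how $\phi$ is used elsewhere in the paper, but an extra input compared with the paper's recurrence test). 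Your flagged degenerate cases ($[k+1]P=\mathcal{O}$ or of order $\leq 3$) are handled, or glossed over, to the same extent in the paper's own proof, so this is not a defect of your argument specifically.
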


\begin{proof}
Calculate $x([k+1]P)$ by Lemma \ref{lemma: qxcoord}.  We can calculate the corresponding possible values for $y$ in probabilistic time $O((\log q)^4)$ \cite[\S 7.1-2]{BacSha}.  To determine which of the two points with this $x$-coordinate is actually $[k+1]P$, first take one of the two candidate points, and proceed on the assumption that it is $[k+1]P$.  Using the addition formula for elliptic curves, calculate $x([k+1]P + P) = x([k+2]P)$.  Compare this with \eqref{eqn: qxcoord} to determine $\widetilde W(k+3)$.  Also determine $\widetilde W(k+4)$ in this manner.  Then, if the terms $\widetilde W(k), \ldots, \widetilde W(k+4)$ satisfy the recurrence instance
\[
\widetilde W(k+4)\widetilde W(k) =  \widetilde W(k+1)\widetilde W(k+3)\widetilde W(2)^2 - \widetilde W(3)\widetilde W(1)\widetilde W(k+2)^2,
\]
our assumption about the point we chose is correct.  If this recurrence does not hold, then the point we chose was incorrect, and the other one is the point $[k+1]P$ we seek.  For, it is impossible that both points cause the above equation to be satisfied:  any sequence of four consecutive terms in an elliptic divisibility sequence determines the entire sequence uniquely.  Finally, knowing $[k+1]P$, we can calculate $Q = [k]P = [k+1]P - P$.
\end{proof}

The following theorem is implicit in the work of Shipsey; see Section~\ref{subsec: shipsey} for an explanation. 
\begin{theorem}
\label{thm: redtodlp}
Suppose $P$ has order relatively prime to $q-1$ and greater than $3$, and $\phi(P)$ is a primitive root in $\Fq^*$.  Given $W_{E,P}(k), W_{E,P}(k+1), W_{E,P}(k+2)$, where it can be assumed that $0 < k < ord(P)$, calculating $k$ can be reduced to a single discrete logarithm in $\F_q^*$ in probabilistic $O((\log q)^4)$ time.
\end{theorem}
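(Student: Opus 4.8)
The plan is to recover the point $Q=[k]P$ from the three given terms, and then use the perfectly periodic sequence $\widetilde{W}_{E,P}$ as a bridge between the curve and the sequence.  The two facts that make this work are \eqref{eqn: perfper}, namely $\widetilde{W}_{E,P}(n)=\phi(P)^{n^2-1}W_{E,P}(n)$, and Theorem \ref{thm: calcphi}, which says that $\widetilde{W}_{E,P}(n)=\phi([n]P)$ can be computed from the point $[n]P$ alone.  Evaluating this at $n=k$ and $n=k+1$ and dividing out the given values $W_{E,P}(k),W_{E,P}(k+1)$ will produce an explicitly known power of $\phi(P)$ whose exponent is linear in $k$; since $\phi(P)$ is a primitive root, one discrete logarithm in $\F_q^*$ recovers that exponent, and a constant-size search then pins down $k$ itself.

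First I would apply Lemma \ref{lemma: qxcoord} to the three given terms to obtain $x([k+1]P)$ in $O((\log q)^2)$ time.  (If one of the three terms is $0$ then, since $0<k<\operatorname{ord}(P)$ and $\operatorname{ord}(P)$ is the rank of zero-apparition, we must have $k=\operatorname{ord}(P)-1$ or $k=\operatorname{ord}(P)-2$, and we are done immediately.)  Next I would extract a $y$-coordinate: the two square roots produce two candidate points $\pm R$, found in probabilistic $O((\log q)^4)$ time \cite{BacSha}, and I would disambiguate them exactly as in the proof of Theorem \ref{thm: calcfromphi} — assuming $R=[k+1]P$, use the group law and \eqref{eqn: qxcoord} to compute the values $W_{E,P}(k+3)$ and $W_{E,P}(k+4)$ that this assumption forces, then test them against one instance of the elliptic divisibility sequence recurrence; only the correct sign passes, because four consecutive terms of an elliptic divisibility sequence determine it uniquely.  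This determines $Q'=[k+1]P$, and hence $Q=[k]P=Q'-P$, in probabilistic $O((\log q)^4)$ time.

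Now compute $\phi(P)$ from \eqref{eqn: phip} using Shipsey's algorithm (Theorem \ref{thm: shipsey}), and compute $\widetilde{W}_{E,P}(k)=\phi(Q)$ and $\widetilde{W}_{E,P}(k+1)=\phi(Q')$ by Theorem \ref{thm: calcphi}; each of these costs $O((\log q)^3)$ and uses no knowledge of $k$.  By \eqref{eqn: perfper},
\[
\phi(P)^{2k+1}=\frac{\widetilde{W}_{E,P}(k+1)\,W_{E,P}(k)}{\widetilde{W}_{E,P}(k)\,W_{E,P}(k+1)},
\]
and the right-hand side is an explicitly known element of $\F_q^*$ (the denominators are nonzero because $0<k<k+1<\operatorname{ord}(P)$ in the non-degenerate case).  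Since $\phi(P)$ is a primitive root, one discrete logarithm in $\F_q^*$ returns an integer $e$ with $e\equiv 2k+1\pmod{q-1}$.  Because $q$ is odd, $q-1$ is even and $e$ is odd, so $k\equiv(e-1)/2\pmod{(q-1)/2}$; together with the Hasse bound on $\operatorname{ord}(P)$ this leaves only $O(1)$ candidates for $k$, and I would test each by one scalar multiplication, checking $[k]P=Q$ ($O((\log q)^3)$ apiece), and output the one that works.  In total the reduction costs probabilistic $O((\log q)^4)$ plus the single $\F_q^*$ discrete logarithm.

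I expect the main obstacle to be the $y$-coordinate disambiguation: of the two points with $x$-coordinate $x([k+1]P)$, we must be certain that only the genuine $[k+1]P$ is consistent with the given terms, and this rests on the fact, already used in Theorem \ref{thm: calcfromphi}, that four consecutive terms determine an elliptic divisibility sequence uniquely.  A secondary nuisance is that the discrete logarithm only yields $2k+1$ modulo $q-1$, so recovering the integer $k$ requires the order bound plus a constant-size verification search; the same observation lets us dispose of the finitely many degenerate inputs — a vanishing given term, or $[k]P$ or $[k+1]P$ having order $\le 3$ so that Theorem \ref{thm: calcphi} does not literally apply — by direct checking without affecting the overall complexity.
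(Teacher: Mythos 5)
Your proposal is correct and follows essentially the same route as the paper's proof: recover the point from the three terms via Lemma \ref{lemma: qxcoord}, square-root extraction and the sign-disambiguation trick of Theorem \ref{thm: calcfromphi}, then use equation \eqref{eqn: perfper} (in the form $\phi([k+1]P)/\phi([k]P)=\phi(P)^{2k+1}W_{E,P}(k+1)/W_{E,P}(k)$) to reduce to a single $\F_q^*$ discrete logarithm, with a constant-size search recovering $k$ from $2k+1 \bmod (q-1)$. Your added remarks on degenerate zero terms and the explicit candidate check only make the argument more careful than the paper's.
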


%\begin{remark}
%Suppose that $\gcd(m, q-1) = 1$.  An equation of the form $A = B^k$ (or $A = B^{k^2}$) in $\F_q$ where $A$ and $B$ depend only on $k$ modulo $m$ cannot be expected to determine $k$ modulo $q-1$ (for as $k$ ranges over representatives of a single coset in $\Z / m\Z$, it ranges over all possible cosets of $\Z / (q-1)\Z$).  Therefore, 
%in order to determine $k$ modulo $q-1$ from such an equation,
%$A$ and $B$ must be determined by the choice of a representative of the coset of $k$ in 
%%depend on choosing a particular value of $k$ from the coset in 
%$\Z / m\Z$ -- say, restricting it to the window $0 \leq k < m$.  We will see that the proof below gives $A$ and $B$ in terms of $W_{E,P}(k)$, which does depend on the choice of a representative of $k$ modulo $m$.
%\end{remark}

\begin{proof}
We can deduce the $x$-coordinate of the point $Q = [k]P$ by Lemma \ref{lemma: qxcoord}.  Compute the two corresponding $y$-coordinates, which takes probabilistic time $O((\log q)^4)$ \cite[\S 7.1-2]{BacSha}. Choosing one of the two possible $y$-coordinates, we have either $Q = [k]P$ or $Q= [-k]P$.  To determine which is correct, use the trick of the proof of Theorem \ref{thm: calcfromphi}.  Suppose it is the former; then, from Theorem \ref{thm: perfper}, we have
\begin{equation}
\label{eqn: 2perfper}
\frac{\phi([k+1]P)}{\phi([k]P)} = \phi(P)^{2k+1}\frac{W_{E,P}(k+1)}{W_{E,P}(k)}.
\end{equation}
So $k$ satisfies an equation of the form $A = B^{2k+1}$ where $A$ and $B$ are known, and $B$ has order $q-1$ by assumption.  Therefore, we are reduced to solving a discrete logarithm of the form $A = B^x$ for $0 \leq x < q-1$, with the understanding that $k$ will be one of $(x-1)/2$ or $(x+q-1)/2$.  (In fact, if $q-1 < m$, there may be at most two other possible values of $k$ to check:  the above values shifted by $q-1$.) 
%We can do this for both possibilities of $Q$, giving at most four `candidate' values of $k \mod q-1$.  Since the order of the point $P$ is on the order of $q-1$ (and $k < \operatorname{ord}(P)$), $k$ can be found in a small number of tests.  Each test consists in calculating $W_{E,P}(k)$ for the candidate $k$ and comparing to the given value.  This last step is the most expensive; its time is given by Theorem \ref{thm: shipsey}.
\end{proof}

\begin{remark}
\label{remark: mq-1}
Let $m = \operatorname{ord}(P)$.  Suppose that $\gcd(m, q-1) = 1$.  As an integer $k$ ranges over representatives of a single coset in $\Z / m\Z$, it ranges over all possible cosets of $\Z / (q-1)\Z$.  Therefore, we cannot expect to find the set of $k$ such that $Q = [k]P$ (i.e. a coset in $\Z / m\Z$) by solving an equation of the form $A = B^k$ in $\F_q^*$ (i.e. solving modulo $q-1$).  One solution to this problem is to attempt to solve for an \emph{integer} $k$ (instead of a coset) -- say, for example, the smallest non-negative $k$ with $Q = [k]P$.  This is in essence what the preceeding theorem does.  With this in mind, we set some terminology.
% $k$ modulo $q-1$.  If we wish to discover $k$ through a discrete logarithm equation of the form $A = B^k$ in $\F_q^*$ -- i.e. by specifying its residue modulo $q-1$ -- we must work under some additional assumption that specifies $k$ as an integer.  In our case, we will add the assumption that $k$ is the minimal multiplier:  that is, $0 \leq k < m$.  Under this assumption, we will find $A$ and $B$ such that $A = B^k$ and proceed to solve for $k$.  Thus, \emph{$A$ and $B$ must depend on the extra assumption made for $k$}.
\end{remark}

\begin{definition}
Let $Q$ be a multiple of $P$ on an elliptic curve $E$.  The \emph{minimal multiplier} of $Q$ with respect to $P$ is the smallest non-negative value of $k$ such that $Q = [k]P$.
\end{definition}

Note that the minimal multiplier satisfies $0 \leq k < \operatorname{ord}(P)$.

\section{$\F_q^*$ Discrete Logarithm, The Tate Pairing and MOV/Frey-R\"{u}ck Attack}
\label{sec: fqdlp}

Theorem \ref{thm: redtodlp} uses terms of the elliptic divisibility sequence to give a discrete logarithm problem in $\F_q^*$.  We demonstrate some variations on this theme, and relate these types of equations to the Tate pairing, and to an ECDLP attack given by Shipsey \cite{Shi}.

\subsection{An $\F_q^*$ DLP equation of the form $A = B^k$ from periodicity properties}
\label{subsec: eqper}

The $\F_q^*$ DLP equations we consider are consequences of Theorem \ref{thm: equiv}, but many can be conveniently understood in terms of its corollary Theorem \ref{thm: perrankn}.  The following example involves the terms $W_{E,P}(k)$ and $W_{E,P}(k+1)$, and requires knowledge of $Q = [k]P$.  The following  diagram is suggestive for the discussion.

\[
\SelectTips{xy}{705pt}
\xymatrix@!=0.001pc@R=1.35pc @C=1.35pc{
{\bullet} & {\circ} & {\circ} & {\circ} & {\circ} & {\bullet} & {\circ} & {\circ} &
{\circ} & {\circ} & {\bullet} & {\circ} & {\circ} & {\circ} & {\circ} & {\bullet} \\
{\circ} & {\circ} & {\circ} & {\bullet} \ar[lllu]_{\mathbf{u}} & {\circ} & {\circ} & {\circ} & {\circ} &
{\bullet} & {\circ} & {\circ} & {\circ} & {\circ} & {\bullet} & {\circ} & {\circ} \\
{\circ} & {\bullet} & {\circ} & {\circ} & {\circ} & {\circ} & {\bullet} \ar[lllu]_{\mathbf{u}} & {\circ} &
{\circ} & {\circ} & {\circ} & {\bullet} & {\circ} & {\circ} & {\circ} & {\circ} \\
{\circ} & {\circ} & {\circ} & {\circ} & {\bullet} & {\circ} & {\circ} & {\circ} &
{\circ} & {\bullet} \ar[lllu]_{\mathbf{u}} & {\circ} & {\circ} & {\circ} & {\circ} & {\bullet} & {\circ} \\
{\circ} & {\circ} & {\bullet} & {\circ} & {\circ} & {\circ} & {\circ} & {\bullet} &
{\circ} & {\circ} & {\circ} & {\circ} & {\bullet} \ar[lllu]_{\mathbf{u}} & {\circ} & {\circ} & {\circ} \\
{\bullet} \ar[uuuuu]^{\mathbf{t}} & {\circ} & {\circ} & {\circ} & {\circ} & {\bullet}\ar[lllll]^{-\mathbf{s}} & {\circ} & {\circ} &
{\circ} & {\circ} & {\bullet} \ar[lllll]^{-\mathbf{s}}& {\circ} & {\circ} & {\circ} & {\circ} & {\bullet} \ar[lllll]^{-\mathbf{s}} \ar[lllu]_{\mathbf{u}}
}
\]
%\normalsize
In this picture of $\Z^2$, $\mathbf{u} = (-3,1)$, $\mathbf{s}=(5,0)$ and $\mathbf{t}=(0,5)$.  Vectors $\mathbf{u}$ and $\mathbf{s}$ generate the lattice of zero-apparition $\Lambda$ for some elliptic net $W$ associated to points $P$ and $Q= [3]P$ of order $5$.  The vector $\mathbf{t}$ is also in $\Lambda$.  One coset of $\Z^2$ modulo $\Lambda$ is shown as the solid discs.

Theorem \ref{thm: perrankn} shows the transformation relative to translation by a vector $\r \in \Lambda$:  it relates $W(\mathbf{v} + \mathbf{r})$ to $W(\mathbf{v})$ for each $\mathbf{v}$.  This Lemma can be applied repeatedly, and different `paths' from one point to another must agree.  In the picture above, the translation property which relates $W(\mathbf{v} + (-15,5))$ to $W(\mathbf{v})$ can be calculated by applying the transformation associated to $\mathbf{u}$ five times (the diagonal path) or by applying the transformation associated to $-\mathbf{s}$ three times followed by that associated to $\mathbf{t}$ once (the sides of the triangle).

In the general case, we have $Q = [k]P$.  Then the lattice of zero-apparition $\Lambda$ for $W = W_{E,P,Q}$ includes vectors $\mathbf{u} = (-k,1)$, $\mathbf{s} = (m, 0)$ and $\mathbf{t} = (0,m)$.  Suppose $\mathbf{r} = (r_1, r_2)$ is an element of $\Lambda$ for $W = W_{E,P,Q}$.  By Theorem \ref{thm: perrankn}, we have for all $l \in \Z$ and $\mathbf{v} \in \Z^2$,
\begin{equation}
\label{eqn: sym2}
W(l\mathbf{r}+\mathbf{v}) = W(\mathbf{v}) a_\mathbf{r}^{lv_1} b_\mathbf{r}^{lv_2} c_\mathbf{r}^{l^2}
\end{equation}
where
\[
a_\mathbf{r} = \frac{W(r_1+2,r_2)}{W(r_1+1,r_2)W(2,0)}, \; b_\mathbf{r} = \frac{W(r_1,r_2+2)}{W(r_1,r_2+1)W(0,2)}, \; c_\mathbf{r} = \frac{W(r_1+1,r_2+1)}{a_\mathbf{r}b_\mathbf{r} W(1,1)}.
\]

We expect appropriate relationships between $a_\mathbf{u}$, $b_\mathbf{u}$, $c_\mathbf{u}$, $a_\mathbf{s}$, $b_\mathbf{s}$, etc.  The $\Fq^*$ DLP equation we seek is one such relationship.  We have
\[
a_\mathbf{s} = \frac{W(m+2,0)}{W(m+1,0)W(2,0)}, \; a_\mathbf{t} = \frac{W(2,m)}{W(1,m)W(2,0)}, \; a_\mathbf{u} = \frac{W(2-k,1)}{W(1-k,1)W(2,0)}.
\]
For each $i \in \Z$, we apply \eqref{eqn: sym2} to obtain
\begin{equation}
\label{eqn: aui}
\frac{W(-ik+1,i-1)W(0,-1)}{W(1,-1)W(-ik,i-1)} = a_\mathbf{u}^i
\end{equation}
Set $i=m$ in \eqref{eqn: aui}, and apply \eqref{eqn: sym2} four times:
\begin{eqnarray*}
a_\mathbf{u}^m &=& \frac{W(-mk+1,m-1)W(0,-1)}{W(1,-1)W(-mk,m-1)} \\
&=& \textstyle{ \left( \frac{W(-mk+1,m-1)}{W(-mk+1,-1)} \right) \left( \frac{W(-mk+1,-1)}{W(1,-1)} \right) \left( \frac{W(0,-1)}{W(-mk,-1)} \right) \left( \frac{W(-mk,-1)}{W(-mk,m-1)} \right) }\\
&=& \frac{
a_\mathbf{t}^{-mk+1} b_\mathbf{t}^{-1} c_\mathbf{t}^1 a_\mathbf{s}^{-k} b_\mathbf{s}^{k} c_\mathbf{s}^{k^2}
}{
a_\mathbf{t}^{-mk} b_\mathbf{t}^{-1} c_\mathbf{t}^1 a_\mathbf{s}^0 b_\mathbf{s}^{k} c_\mathbf{s}^{k^2}
 } = a_\mathbf{t} a_\mathbf{s}^{-k}
\end{eqnarray*}
%Thus we have
%\[
%a_\mathbf{t} a_\mathbf{s}^{-k} = a_\mathbf{u}^m.
%\]
Setting $i=1$ in \eqref{eqn: aui}, we obtain an expression
\[
a_\mathbf{u} = \frac{W(-k+1,0)W(0,-1)}{W(1,-1)W(-k,0)} = - \frac{W_{E,P}(k-1)}{W_{E,P}(k) W(1,-1)}
\]
which, when substituted into the last calculation, yields
\begin{equation}
\label{eqn: tridlp}
\left( \frac{ W(m+1,0)W(2,0) }{ W(m+2,0) } \right)^k
= 
\left( \frac{ W_{E,P}(k-1) }{ W_{E,P}(k) } \right)^m \left( - \frac{  W(1,m) W(2,0) }{W(2,m)W(1,-1)^{m}} \right).
\end{equation}

\subsection{An $\F_q^*$ DLP equation from Shipsey's Thesis}
\label{subsec: shipsey}

The possibility of such an equation was observed by Rachel Shipsey in her thesis \cite[(6.3)]{Shi}.  She uses one-dimensional periodicity properties to derive the following equation:
\begin{equation}
\label{eqn: shipseyrank1}
\frac{ W_{E,P}((m+1)(k+1)) W_{E,P}(k) } {W_{E,P}((m+1)k)W_{E,P}(k+1)}
= W_{E,P}(m+1)^{2k+1}
\end{equation}
Shipsey then argues that without knowledge of $k$ the left hand side can be calculated up to a factor of $$\left( \frac{ W_{E,P}(k) }{W_{E,P}(k-1)} \right)^{m(m+2)}.$$  This is very much of the same spirit as equation \eqref{eqn: tridlp}, and in fact, Theorem \ref{thm: equiv} can be used to rewrite \eqref{eqn: shipseyrank1} in this form:
\begin{equation}
\label{eqn: shipsey}
\frac{W_{E,P,Q}(m+1,m+1)}{W_{E,P,Q}(0,m+1)} \left( \frac{W_{E,P}(k+1)}{W_{E,P}(k)} \right)^{m(m+2)} = W_{E,P}(m+1)^{2k+1}.
\end{equation}
By Lemma \ref{lemma: qxcoord}, knowledge of $Q, W_{E,P}(k), W_{E,P}(k-1)$ determines $W_{E,P}(k+1)$, and so this is very much equivalent to Shipsey's analysis.  Note that the unknown terms in \eqref{eqn: shipsey} are raised to the exponent $m+2$.  At first blush, this may appear to lead to an ECDLP attack for $q-1=m+2$ (where the unknown terms will disappear).  However, this is not allowed by Remark \ref{remark: mq-1}.  In fact, it turns out that if $q-1=m+2$, then $W_{E,P}(m+1) = 1$ (this eventually follows from Theorem \ref{thm: equiv} also).

\subsection{$\F_q^*$ DLP equations and the Tate pairing}
\label{subsec: tate}

Choose $m \in \Z^+$.  Let $E$ be an elliptic curve defined over a finite field $K$ containing the $m$-th roots of unity.  Suppose $P \in E(K)[m]$ and $Q \in E(K)/mE(K)$.  Since $P$ is an $m$-torsion point, $m(P) - m(\mathcal{O})$ is a principal divisor, say $\operatorname{div}(f_P)$.  Choose another divisor $D_Q$ defined over $K$ such that $D_Q \sim (Q) - (\mathcal{O})$ and with support disjoint from $\operatorname{div}(f_P)$.  Then, we may define the Tate pairing
\begin{equation*}
\tau_m : E(K)[m] \times E(K) / mE(K) \rightarrow K^* / (K^*)^m
\end{equation*}
and Weil pairing
\begin{equation*}
e_m : E(K)[m] \times E(K)[m] \rightarrow \mathbb{\mu}_m
\end{equation*}
by
\begin{equation*}
\tau_m(P,Q) = f_P(D_Q), \qquad e_m(P,Q) = f_P(D_Q) f_Q(D_P)^{-1}.
\end{equation*}
Both are non-degenerate bilinear pairings, while the Weil pairing is alternating.  For details, see \cite{DuqFre2}\cite{Gal}.

The Tate pairing and Weil pairing are used in the MOV \cite{MenOkaVan} and Frey-R\"{u}ck \cite{FreRuc} attacks on the ECDLP.  These use the Weil and Tate pairings, respectively, to translate an instance of the ECDLP into an $\Fq^*$ DLP equation, where index calculus methods may be used.  The basic idea, illustrated here for the Tate pairing, is that $Q = [k]P$ implies $\tau_m(Q,S) = \tau_m(P,S)^k$ by bilinearity.  If $S$ can be chosen so that $\tau_m(P,S)$ is non-trivial, and if the Tate pairing takes values in a manageably small finite field, then index calculus methods can be used to determine $k$.  In particular, this attack applies for curves $E$ over $\Fq$ where $m = q-1$.

In \eqref{eqn: shipsey} and \eqref{eqn: tridlp}, all the terms may be calculated from knowledge of $m$, $P$ and $Q$ except for $W_{E,P}(k)$ and $W_{E,P}(k-1)$.  However, notice that these unknown terms are raised to the power $m$.  Therefore, in the case that $m=q-1$, no extra information is needed and the ECDLP is reduced to an $\Fq^*$ DLP; this works in exactly the cases that the MOV or Frey-R\"{u}ck attack applies.

These sorts of `alternate versions' of the MOV/Frey-R\"{u}ck attack do have a relation to the Tate pairing.

\begin{theorem}[{Stange \cite[Thm. 17.2.1]{Sta1}\cite[Thm. 6]{Sta3}}]
\label{thm: tate}
Let $E$ be an elliptic curve, $m \geq 4$, and $P \in E[m]$.  Let $Q, S \in E$ be such that $S \not\in \left\{ \mathcal{O}, Q \right\}$.  Let $W$ be an elliptic net of rank $n$, associated to points $\mathbf{T} \in E(K)^n$.   Let $\mathbf{s}, \mathbf{p}, \mathbf{q} \in Z^n$ be such that
$$ P = \mathbf{p} \cdot \mathbf{T}, \qquad Q = \mathbf{q} \cdot \mathbf{T}, \qquad S = \mathbf{s} \cdot \mathbf{T}. $$
Let $\tau_m: E[m] \times E / mE \rightarrow K^* / (K^*)^m$ be the Tate pairing.  Then
\begin{equation*}
%\label{eqn: tate}
\tau_m(P,Q) = \frac{
W(m\mathbf{p} + \mathbf{q} + \mathbf{s}) W(\mathbf{s})
}{
W(m\mathbf{p} + \mathbf{s}) W(\mathbf{q} + \mathbf{s})
}.
\end{equation*}
\end{theorem}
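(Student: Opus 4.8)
The plan is to realise an appropriate ratio of net values, viewed as a rational function of a single auxiliary point, as a Miller function for $P$, and then to recover the Tate pairing by evaluating it.

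\emph{Reduction to the net of $P$, $Q$, $S$.} Both sides of the asserted formula lie in $K^*/(K^*)^m$ and are invariant under the equivalence relation on elliptic nets: if $f$ is quadratic, the combination $f(m\mathbf p+\mathbf q+\mathbf s)f(\mathbf s)/(f(m\mathbf p+\mathbf s)f(\mathbf q+\mathbf s))$ has ``logarithm'' $2\mathcal B(m\mathbf p,\mathbf q)$, where $\mathcal B$ is the symmetric bilinear form attached to the quadratic part of $f$, so its exponent is a multiple of $m$. Applying Theorem~\ref{thm: equiv} to the $3\times n$ integral matrix with rows $\mathbf p$, $\mathbf q$, $\mathbf s$ then shows that $W(m\mathbf p+\mathbf q+\mathbf s)W(\mathbf s)/(W(m\mathbf p+\mathbf s)W(\mathbf q+\mathbf s))$ differs from $W_{E,P,Q,S}(m,1,1)/W_{E,P,Q,S}(m,0,1)$ by an $m$th power: in the relevant ratio the correction factors contribute only $W_{E,\mathbf T}(\mathbf p)$, $W_{E,\mathbf T}(\mathbf q)$, $W_{E,\mathbf T}(\mathbf p+\mathbf q)$, with respective exponents $-m$, $-m$, $m$. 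Since $W_{E,P,Q,S}(0,0,1)=W_{E,P,Q,S}(0,1,1)=1$, it remains to prove
\[
\tau_m(P,Q)\equiv \frac{W_{E,P,Q,S}(m,1,1)}{W_{E,P,Q,S}(m,0,1)}\pmod{(K^*)^m}.
\]
Degenerate cases, where $P$, $Q$ or $S$ equals $\mathcal O$ or two of the points coincide or are inverses so that the rank-three net is undefined, are either trivial or reduce by the same device to a net of rank one or two; I would dispose of them briefly at the end.

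\emph{The divisor computation.} Fix $P\in E[m]$ and view $R\mapsto W_{E,P,R}(m,1)=\Psi_{(m,1)}(P,R)$ as a rational function on $E$. The claim is $\operatorname{div}_R W_{E,P,R}(m,1)=m(\mathcal O)-m(-P)$; more generally $\operatorname{div}_R\Psi_{(a,1)}(P,R)=(-[a]P)+(a-1)(\mathcal O)-a(-P)$, where property 3 of the principal theorem locates the simple zero at $R=-[a]P$, the denominators $(x(P)-x(R))^{-1}$ in the coordinate ring of the $\Psi_{(a,1)}$ account for the pole of order $a$ along $R=-P$, and a local analysis at $R=\mathcal O$ produces the zero of order $a-1$ there. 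I would establish this by induction on $a$ using the net recurrence, with base cases $\Psi_{(1,1)}=1$ and the explicit formula for $\Psi_{(2,1)}$; the Weierstrass expression $W_{E,P,R}(a,b)=\sigma(au+bv)/(\sigma(u)^{a^2-ab}\sigma(v)^{b^2-ab}\sigma(u+v)^{ab})$, with $u,v$ the elliptic logarithms of $P,R$, exhibits the same divisor at once over $\mathbb C$, and the identity specialises. Taking $a=m$ and using $[m]P=\mathcal O$ collapses this to $m(\mathcal O)-m(-P)$, so $W_{E,P,R}(m,1)=c_P/f_{-P}(R)$ in $K(E)$, where $f_{-P}$ has divisor $m(-P)-m(\mathcal O)$ and $c_P$ is a constant independent of $R$.

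\emph{Assembly.} By Theorem~\ref{thm: equiv} applied to the $2\times 3$ matrix with rows $\mathbf e_1$ and $\mathbf e_2+\mathbf e_3$ one gets $W_{E,P,Q,S}(m,1,1)=W_{E,P,Q+S}(m,1)\cdot\Psi_{(1,1,1)}(P,Q,S)^{m}$, while restricting the net to the sublattice $\{(a,0,c):a,c\in\mathbb Z\}$ gives $W_{E,P,Q,S}(m,0,1)=W_{E,P,S}(m,1)$. Both $W_{E,P,Q+S}(m,1)$ and $W_{E,P,S}(m,1)$ are the single function $R\mapsto W_{E,P,R}(m,1)=c_P/f_{-P}(R)$ evaluated at $R=Q+S$ and $R=S$, so
\[
\frac{W_{E,P,Q,S}(m,1,1)}{W_{E,P,Q,S}(m,0,1)}=\frac{f_{-P}(S)}{f_{-P}(Q+S)}\,\Psi_{(1,1,1)}(P,Q,S)^{m}=\tau_m(-P,Q)^{-1}\,\Psi_{(1,1,1)}(P,Q,S)^{m},
\]
evaluating $\tau_m(-P,Q)=f_{-P}((Q+S)-(S))$ on the auxiliary divisor $D_Q=(Q+S)-(S)\sim (Q)-(\mathcal O)$ (legitimate for $S$ outside a finite bad set). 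Since $\tau_m(-P,Q)=\tau_m(P,Q)^{-1}$ by bilinearity and $\Psi_{(1,1,1)}(P,Q,S)^{m}\in(K^*)^m$, the right-hand side equals $\tau_m(P,Q)$ modulo $(K^*)^m$; the excluded positions of $Q$ and $S$ are handled by the $S$-independence of both sides.

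I expect the divisor computation in the second step to be the main obstacle: pinning down the order of vanishing at $\mathcal O$, where the net value is not a regular function in the naive affine sense, and verifying that the factors $(x_i-x_j)^{-1}$ in the coordinate ring contribute precisely the pole along $R=-P$ and nothing spurious. Everything after that is bookkeeping with Theorem~\ref{thm: equiv} and the bilinearity of $\tau_m$, together with a routine treatment of the finitely many non-generic configurations, in which a zero of $\Psi_{(1,1,1)}(P,Q,S)$ is cancelled by a compensating pole of $W_{E,P,Q+S}(m,1)$.
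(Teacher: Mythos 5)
The paper itself contains no proof of this theorem: it is imported verbatim from Stange's thesis and her paper on the Tate pairing via elliptic nets, so there is no internal argument to compare against. Your sketch follows essentially the same route as those cited sources: reduce the rank-$n$ statement to a low-rank net via Theorem~\ref{thm: equiv}, compute the divisor of the net polynomial in one argument so as to recognise it as (the reciprocal of) a Miller function for $P$, and evaluate on the auxiliary divisor $(Q+S)-(S)$. The bookkeeping you actually carry out is correct: applying Theorem~\ref{thm: equiv} to the matrix with rows $\mathbf{p},\mathbf{q},\mathbf{s}$ leaves precisely the correction $W_{E,\mathbf{T}}(\mathbf{p})^{-m}W_{E,\mathbf{T}}(\mathbf{q})^{-m}W_{E,\mathbf{T}}(\mathbf{p}+\mathbf{q})^{m}$, so the statement does reduce, modulo $(K^*)^m$, to the ratio $W_{E,P,Q,S}(m,1,1)/W_{E,P,Q,S}(m,0,1)$; the divisor claim $\operatorname{div}_R \Psi_{(a,1)}(P,R)=(-[a]P)+(a-1)(\mathcal{O})-a(-P)$ is the right one and collapses to $m(\mathcal{O})-m(-P)$ at $a=m$; and the assembly $f_{-P}\bigl((Q+S)-(S)\bigr)=\tau_m(-P,Q)=\tau_m(P,Q)^{-1}$ modulo $m$th powers yields the stated formula with the correct orientation.

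What remains undone is exactly what you flag, and it is the real content. First, the divisor lemma over an arbitrary field: in the cited proof this is handled by computing the divisor of the net polynomials from the sigma-function definition over $\mathbb{C}$ and transferring it algebraically, and your alternative induction via the recurrence would need the local analysis at $R=\mathcal{O}$ and at $R=-P$ made precise, as you note. Second, the degenerate configurations are not quite as routine as "dispose of briefly": besides $Q$ or $S$ making the rank-three net undefined, the case $Q+S=-P$ kills both $\Psi_{(1,1,1)}(P,Q,S)$ and the net $W_{E,P,Q+S}$, and there your appeal to "$S$-independence of both sides" is mildly circular, since independence of the right-hand side from the choice of $\mathbf{s}$ is itself part of what the theorem encodes; you would need either a direct net identity or a specialization argument for those finitely many bad positions of $S$. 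With those two items supplied, the argument is the proof given in the references the paper cites.
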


Now equations \eqref{eqn: tridlp} and \eqref{eqn: shipsey} can be re-written as statements in terms of the Tate pairing.  

{\bf Equation \eqref{eqn: tridlp}:}  Use Theorem \ref{thm: tate} with $\mathbf{p} = (1,0), \mathbf{q} = (-1,0), \mathbf{s} = (2,0)$ for the left-hand side and $\mathbf{p} = (0,1), \mathbf{q} = (-1,0), \mathbf{s} = (2,0)$ for the right.  This rewrites \eqref{eqn: tridlp} as
\begin{equation*}
\tau_m(P,-P)^k = \tau_m(Q,-P).
\end{equation*}
%\qed

{\bf Equation \eqref{eqn: shipsey}:}  This is somewhat more complicated.  From Theorem \ref{thm: perrank1} with $m=q-1$ and Theorem \ref{thm: tate} with various parameters,
\[
W_{E,P}(m+1)^2 \tau_m(P,P)^{-2} = \left( \frac{W_{E,P}(m+1)^2W_{E,P}(2)}{W_{E,P}(m+2)} \right)^2 = b^2 = a^m = 1,
\]
\begin{align*}
\tau_m(P,Q) &= \frac{ W_{E,P,Q}(m+1,1)W_{E,P,Q}(1,0) }{ W_{E,P,Q}(m+1,0) W_{E,P,Q}(1,1) }, \\ \tau_m(Q,P) &= \frac{ W_{E,P,Q}(1,m+1)W_{E,P,Q}(0,1) }{ W_{E,P,Q}(0,m+1) W_{E,P,Q}(1,1) },
\end{align*}
\begin{equation*}
1 = \tau_m(P,\mathcal{O}) = \tau_m(P,[m]Q) =  \frac{ W_{E,P,Q}(m+1,m+1)W_{E,P,Q}(1,1) }{ W_{E,P,Q}(m+1,1) W_{E,P,Q}(1,m+1) }.
\end{equation*}
All of which, taken together, rewrites \eqref{eqn: shipsey} as
\[
\tau_m(P,Q)\tau_m(Q,P) = \tau_m(P,P)^{2k}.
\] 
%\qed

Equation \eqref{eqn: perfper} (with $n=k$) does not, however, lend itself to this sort of re-writing in terms of pairings in the case $m=q-1$, as the very definition of $\phi(P)$ requires the assumption that $\gcd(m, q-1) = 1$.  %If we were to raise it to the $m^2$-th power (to avoid this assumption), the terms all vanish since $m=q-1$.

\section{ECDLP through EDS Association}
\label{sec: seqass}

The previous sections have demonstrated that there are a variety of ways to translate an ECDLP into an $\Fq^*$ DLP.  The $\Fq^*$ DLP equation is in terms of elements of the sequence $W_{E,P}$.  For example in \eqref{eqn: tridlp}, the elements are $W_{E,P}(k)$ and $W_{E,P}(k-1)$.  The problem of finding these terms (with knowledge of $Q = [k]P$ but not $k$) is \emph{EDS Association}.  In this example, however, it is only their quotient that is needed.  Depending on the form of the $\Fq^*$ DLP equation, different information (certain terms or ratios of terms) suffices.  We formalise the most general statement of this in the following theorem.

\begin{proposition}
\label{prop: prod1}
Fix an elliptic curve $E$ defined over $\Fq$, and $P \in E(\Fq)$ of order greater than three and relatively prime to $q-1$.  Suppose $\phi(P)$ has order $q-1$ in $\Fq^*$.  With knowledge of any product
\begin{equation}
\label{eqn: prod1}
\prod_{i=1}^N W_{E,P}(p_i(k))^{e_i}, 
\end{equation}
where the $e_i \in \Z$, and $p_i(x) \in \Z[x]$ of degree at most $D$, and
%\begin{equation*}
%\label{eqn: polyconst1}
$t(x) = \sum_{i=1}^N e_i p_i(x)^2$
%\end{equation*}
is a non-constant polynomial of degree at most $2$ in $\Z[x]$, the value of $k$ can be determined in subexponential time in $q$, with constants depending on $D$ and $N$.
\end{proposition}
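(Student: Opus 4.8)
The plan is to push all of the $k$-dependence of the given product into a single discrete logarithm in $\Fq^*$. Write $m=\operatorname{ord}(P)$, $\beta=\phi(P)$, $E_0=\sum_{i=1}^N e_i$ and $t(x)=\sum_{i=1}^N e_i\,p_i(x)^2$; by hypothesis $\beta$ is a primitive root of $\Fq^*$ and $\deg t\in\{1,2\}$. We may assume we are handed, besides the value $V=\prod_{i=1}^N W_{E,P}(p_i(k))^{e_i}$, the point $Q=[k]P$ (this is the setting of Problem \ref{prob: seqass}), and we seek the minimal multiplier $k\in[0,m)$. First compute $\beta$ from $P$ via \eqref{eqn: phip} in time $O((\log q)^3)$. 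By \eqref{eqn: perfper} we have $W_{E,P}(n)=\beta^{1-n^2}\phi([n]P)$ for every $n\in\Z$, so substituting into $V$ and collecting the powers of $\beta$ gives
\[
\beta^{\,t(k)} \;=\; \beta^{E_0}\,V^{-1}\prod_{i=1}^N \phi\bigl([p_i(k)]P\bigr)^{e_i}.
\]
The crucial property of $\phi$ is that, for a point $R\in\langle P\rangle$ of order exceeding $3$, the value $\phi(R)$ is computable \emph{from $R$ alone}, without knowing its index, in time $O((\log q)^3)$ (Theorem \ref{thm: calcphi}, via \eqref{eqn: phip}). Hence the right-hand side above is computable as soon as the points $[p_i(k)]P$ are available.

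When every $p_i$ is linear --- which is exactly the case occurring in the examples of Section \ref{sec: fqdlp}, e.g.\ \eqref{eqn: tridlp} and \eqref{eqn: shipsey} --- this is immediate: writing $p_i(x)=c_{i1}x+c_{i0}$ we have $[p_i(k)]P=[c_{i1}]Q+[c_{i0}]P$, which we compute from $P$ and $Q$. The displayed identity then reads $\beta^{t(k)}=\gamma$ with $\gamma\in\Fq^*$ explicit, and since $\beta$ has order $q-1$, solving this discrete logarithm in $\Fq^*$ --- sub-exponential by index calculus --- recovers $c:=t(k)\bmod(q-1)$. Next solve the congruence $t(z)\equiv c\pmod{q-1}$ for $z$: factor $q-1$ (sub-exponential), solve modulo each prime power (a linear solve, or Tonelli--Shanks followed by Hensel lifting when $\deg t=2$, handling the prime $2$ separately), and recombine by the Chinese Remainder Theorem. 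As $\deg t\le2$ and the coefficients of $t$ are fixed, this produces at most $q^{o(1)}$ residues $z\bmod(q-1)$, each of which, since $0\le k<m\le q+1+2\sqrt{q}$ by Hasse's bound, yields only a bounded number of candidate integers $k'$. Finally, test each candidate by checking $[k']P=Q$, which costs $O((\log q)^3)$, and output the one that works. Every step is sub-exponential in $q$, the implied constants depending only on $D$, $N$ and the fixed coefficients. (For the finitely many $q$ at which $t$ happens to become constant modulo $q-1$ the method carries no information about $k$; these are harmlessly excluded from the asymptotic statement.)

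The hard part is the general case $D\ge2$. If some $p_i$ has degree $d\ge2$, then the point $[p_i(k)]P$ involves the multiples $[k^2]P,\dots,[k^{d}]P$, and these cannot be obtained from $Q=[k]P$ without already solving the ECDLP, so the displayed identity is no longer directly usable. One natural attempt is to apply the transformation formula \eqref{eqn: equiv} of Theorem \ref{thm: equiv} to rewrite a factor with nonlinear argument as net values at arguments linear in $k$ times correction factors --- e.g.\ $W_{E,P}(k\cdot k)=W_{E,[k]P}(k)\,W_{E,P}(k)^{k^2}$; the intended role of the hypothesis $\deg t\le2$ should be that, after all such rewritings, the correction factors again assemble into a product of the form \eqref{eqn: prod1} that collapses to a power of $\beta$ with exponent of degree at most $2$ in $k$, returning us to the linear case. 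Making this work --- in particular arranging that every surviving quantity is computable from $P$ and $Q$, which the naive rewriting above does not by itself achieve, since it reintroduces $W_{E,[k]P}(k)$ and hence $[k^2]P$ --- is where I expect the real difficulty to lie.
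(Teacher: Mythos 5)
Your core mechanism is exactly the paper's: combine one instance of \eqref{eqn: perfper} for each $n=p_i(k)$ with multiplicity $e_i$, so that the known product \eqref{eqn: prod1} together with the values $\phi([p_i(k)]P)$ (computable from the points alone by Theorem \ref{thm: calcphi}) sits on one side, and $\phi(P)$ raised to an exponent equal to $t(k)$ up to an explicit constant sits on the other; then index calculus in $\F_q^*$ gives $t(k)\bmod(q-1)$, and one solves for $k$. Your bookkeeping is in fact the correct one: the quotient of the two products equals $\phi(P)^{t(k)-E_0}$, whereas the paper's displayed identity writes the exponent as $-t(k)$; this slip is immaterial to the argument. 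Your explicit recovery of $k$ from $t(k)\bmod(q-1)$ --- factor $q-1$, solve the prime-power congruences, recombine by CRT, and test the $q^{o(1)}$ candidates against $[k']P=Q$ --- is precisely the content the paper delegates to its citation of Bach--Shallit, so for linear $p_i$ your proof is complete and matches the paper's.

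The place where you stop, namely $p_i$ of degree at least two, is exactly the step the paper does not justify: its proof simply asserts that the $N$ points $[p_i(k)]P$ can be computed from $P$ and $Q=[k]P$ without knowledge of $k$ in $O(D\log D)$ curve operations. That assertion is clear only when each $p_i$ is linear, which is the situation in every use the paper makes of such products (e.g.\ \eqref{eqn: tridlp}, \eqref{eqn: shipsey}, and Proposition \ref{prop: qr} as applied in Corollary \ref{cor: qr} with $p_1(x)=x$). For $\deg p_i\ge 2$ one would need, for instance, $[k^2]P$ from the pair $(P,[k]P)$, a Diffie--Hellman-type computation for which no sub-exponential algorithm is known; and, as you observe, the attempted repair via Theorem \ref{thm: equiv} merely reintroduces that same quantity. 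So the difficulty you flag is real, but it is not an idea you failed to find in the paper: in the generality stated (arbitrary $p_i$ of degree up to $D$ with only $t$ of degree at most $2$), the paper's own proof is incomplete at the identical point, and the proposition is safely read with the $p_i$ linear, where your argument goes through in full.
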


\begin{proof}
Combine appropriate instances of equation \eqref{eqn: perfper} of Theorem \ref{thm: perfper} in such a way that $t(k)$ satisfies an equation in $\F_q^*$ of the form $A = B^{t(k)}$.  That is, combine one instance for each $n=p_i(k)$ with multiplicities given by the respective $e_i$, and obtain an equation of the form
\[
\frac{\displaystyle{\prod_{i=1}^N \widetilde W_{E,P}(p_i(k))^{e_i}}}{\displaystyle{ \prod_{i=1}^N W_{E,P}(p_i(k))^{e_i} }} = (\phi(P)^{-1})^{t(k)}
\]
(Perhaps it is easier to demonstrate this concept by example:  suppose that $1=e_1=-e_2$, $p_1(k)=k+1$, and $p_2(k) = k$, so that $t(k) = 2k+1$, and obtain equation \eqref{eqn: 2perfper} (note the product \eqref{eqn: prod1} appears on the right side) from combining equation \eqref{eqn: perfper} for $n=p_1(k)=k$ and $n=p_2(k)=k+1$ with multiplicities given by $e_1=1$ and $e_2=-1$.)

The left hand side $A$ includes the known product \eqref{eqn: prod1} as well as terms of the form $\phi([p_i(k)]P)$, while $B = \phi(P)^{-1}$.  The $N$ points $[p_i(k)]P$ can each be calculated from knowledge of $P$ and $Q=[k]P$ without knowledge of $k$ in $O(D \log D)$ curve operations.  Then the various $\phi$ terms can be computed in time time $O((\log q)^3)$ by Theorem \ref{thm: calcphi}.  Thus we have computed $A$ and $B$.

Solving the discrete logarithm $A = B^{t(k)}$ for $t(k)$ can be done sub-exponentially by index calculus methods.  Solving for $k$ from $t(k)$ is sub-exponential \cite[\S 7.1-2]{BacSha}.\end{proof}

It is evident that the most costly step is the index calculus step, which in many cases has run time $r(q) = \exp( c (\log q)^{1/3} (\log \log q)^{2/3} )$ \cite[p.306]{CraPom}. 
%Theorem \ref{thm: redtodlp} is an example of the sort of $\F_q^* DLP$ equation allowed by the statement of this theorem.
%  If $\operatorname{ord}_{\Fq^*}(\phi(P))=1$, the run-time is no longer exponential.

%More specifically, it can be determined in time
%\[ 
%O\left( \max\left\{ r(q), s(2\max \deg p_i,q)\frac{(\max \deg p_i)}{\operatorname{ord}_{\F_q^*}(\phi(P))}, \frac{(\log q)(q-1)(\max \deg p_i)}{\operatorname{ord}_{\F_q^*}(\phi(P))} \right\} \right),
%\]
%where $r(q)$ represents the time taken to solve a discrete log problem in $\F_q^*$, $s(d,q)$ represents the time taken to find all roots of a polynomial of degree $d$ in $\Z / (q-1)\Z$.
%Note that in many cases 
%.  And $s(d,q)...$? NEEDSWORK.

%Something similar can be done for elliptic nets in general, but we omit this.

%\begin{remark}
%By Remark \ref{remark: mq-1}, any such product of terms \eqref{eqn: prod1} which suffices for solving for $k$ must vary as $k$ changes by multiples of $m$.  In particular, it cannot be a rational function in the coordinates of $P$ and $Q$ alone.  Lemma \ref{lemma: qxcoord} gives such an example product:
%\begin{equation*}
%\frac{W_{E,P}(k+1)W_{E,P}(k-1)}{W_{E,P}(k)^2}
%\end{equation*}
%Such products cannot satisfy the hypotheses of Theorem \ref{thm: prod1}.  They fail since they have the property that the corresponding polynomial \eqref{eqn: polyconst1} is constant.
%\end{remark}
%
%DO THIS FOR NETS?

\section{ECDLP and Quadratic Residues}
\label{sec: seqres1}

We will show that determining only one bit of information -- the residuosity -- about a term $W_{E,P}(k)$ may  suffice to solve the ECDLP.  First, we observe a hypothetical method of attack for ECDLP.

\begin{proposition}
\label{prop: parity}
Let $P$ be a point of odd order relatively prime to $q-1$.  Given an oracle which can determine the parity of the minimal multiplier of any non-zero point $Q$ in $\left< P \right>$ in time $O(T(q))$, the elliptic curve discrete logarithm for any such $Q$ can be determined in time $O(T(q) \log q +(\log q)^2)$.
\end{proposition}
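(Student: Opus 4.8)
The plan is to recover $k$ by extracting, one bit per oracle call, the binary expansion of the rational number $k/m$, where $m := \operatorname{ord}(P)$, which is odd by hypothesis. Since $\gcd(2,m)=1$, doubling is a bijection of $\langle P\rangle$; for a nonzero $R\in\langle P\rangle$ write $\mu(R)\in\{1,\dots,m-1\}$ for its minimal multiplier, so that $\mu([2]R)=2\mu(R)\bmod m$. Because $m$ is odd we have $2\mu(R)\neq m$, hence this residue is even if and only if $2\mu(R)<m$, i.e. if and only if $\mu(R)<m/2$; so a single call of the oracle on the point $[2]R$ decides whether $\mu(R)<m/2$. Now put $R_0=Q$, so that $\mu(R_0)=k\geq 1$ (as $Q\neq\mathcal{O}$), and set $R_t=[2]R_{t-1}$ for $t\geq 1$, each obtained by one curve doubling; then $\mu(R_t)=2^t k\bmod m$, which is nonzero for every $t$ since $k\neq 0$ and $\gcd(2^t,m)=1$, so the oracle always applies and returns $\beta_t:=\mu(R_t)\bmod 2$.

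The substantive step is then to observe that $\beta_t$ is exactly the $t$-th binary digit of $k/m$. Writing $2^t k = q_t m + \mu(R_t)$ with $0\le\mu(R_t)<m$, the left side is even while $m$ is odd, so $q_t\equiv\mu(R_t)\pmod 2$; and $q_t\bmod 2$ is by definition the $t$-th binary digit of $k/m$. Hence $k/m=\sum_{t\geq 1}\beta_t 2^{-t}$, and running the loop for $t=1,\dots,L$ with $L=\lceil\log_2 m\rceil+1$ determines $k/m$ to within $2^{-L}<1/(2m)$. Since $k$ is the unique integer in $[0,m)$ within $1/(2m)$ of $\sum_{t=1}^{L}\beta_t 2^{-t}$, it is recovered by a single rounding (using that $\operatorname{ord}(P)$ is known, as is standard in the discrete-log setting). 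For the running time: $m=O(q)$ by the Hasse bound, so $L=O(\log q)$; each iteration is one oracle call plus $O(1)$ elliptic-curve operations, so the loop costs $O(T(q)\log q)$ (the per-iteration curve arithmetic, $O((\log q)^2)$, being no more than a single oracle call), and the final reconstruction costs $O((\log q)^2)$ -- yielding the stated $O(T(q)\log q+(\log q)^2)$.

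I expect no genuine obstacle: the one place where a hypothesis is essential is the oddness of $\operatorname{ord}(P)$, which is precisely what makes the doubling map invertible on $\langle P\rangle$ (so the digit extraction is well defined), prevents $2\mu(R)$ from equalling $m$ (so the parity bit faithfully records the comparison of $\mu(R)$ with $m/2$), and ensures the loop never hands $\mathcal{O}$ to the oracle. The remaining points -- the congruence $q_t\equiv\mu(R_t)\pmod 2$ and the uniqueness of $k$ given $L$ binary digits -- are elementary, so the only real work is setting up the correspondence between oracle answers and the digits of $k/m$.
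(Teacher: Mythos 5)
Your proof is correct, but it proceeds differently from the paper. The paper's algorithm works \emph{from the least significant bit of $k$ upward}: it queries the oracle on the current point, then replaces $Q$ by the unique half of $Q$ (if $k$ was even) or of $Q-P$ (if $k$ was odd) inside the odd-order group $\left<P\right>$, so the oracle answers are literally the binary digits of $k$ and the queries are adaptive; the key group-theoretic ingredient is that halving is well defined and efficiently computable in a group of odd order. You instead only ever \emph{double}: you query the oracle on $[2^t]Q$ and show, via the congruence $q_t\equiv \mu(R_t)\pmod 2$ coming from $2^tk=q_tm+\mu(R_t)$ with $m$ odd, that the answers are the binary digits of $k/m$, after which $k$ is recovered by rounding $m\sum_{t\le L}\beta_t2^{-t}$ (your phrase ``$k$ is the unique integer within $1/(2m)$ of the sum'' should read ``$k/m$ is within $1/(2m)$ of the sum,'' but the rounding argument is fine). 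The trade-offs: your queries are non-adaptive and avoid point halving altogether (the paper must appeal to halving techniques and to uniqueness of the half in $\left<P\right>$), but you need the exact value of $m=\operatorname{ord}(P)$ both to choose $L$ and to round, whereas the paper's loop never uses $m$ explicitly. One small accounting caveat: a doubling costs $O((\log q)^2)$ bit operations under the paper's naive-arithmetic convention, so your loop is strictly $O(T(q)\log q+(\log q)^3)$ unless the oracle call dominates the curve arithmetic; this is the same order of looseness as the paper's ``$O(\log q)$ per halving'' and is immaterial for the sub-exponential equivalence in Theorem \ref{mainthm}, but you should not assert $T(q)=\Omega((\log q)^2)$ without justification.
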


\begin{proof}
Suppose that $k$ is the minimal multiplier of $Q$ with respect to $P$.  The basic algorithm is:
\begin{enumerate}
\item If $Q = P$, stop.
\item Call the oracle to determine the parity of $k$.  If $k$ is even, find $Q'$ such that $[2]Q' = Q$.  If $k$ is odd, find $Q'$ such that $[2]Q' = Q-P$.   
\item Set $Q = Q'$ and return to step $1$.
\end{enumerate}
In Step $2$, since the cyclic group $\left<P\right>$ has odd order, there is a unique $Q'$.  It can be found in $O(\log q)$ time (see \cite{FonHanLopMen} for methods).  Furthermore, $Q' = [k']P$ where
\[
k' = \left\{ \begin{array}{ll}
k/2 & k \mbox{ even} \\
(k-1)/2 & k \mbox{ odd}
\end{array} \right. .
\]
Then $k'$ is the minimal multiplier for $Q'$ with respect to $P$.  At the end of this process, the value of the original $k$ can be deduced from the sequence of steps taken.  For each even step, record a `0', and for each odd step a `1', writing from right to left, and adding a final `1': this will be the binary representation of $k$.  The number of steps is $\log_2 k = O(\log q)$.
\end{proof}

\begin{proposition}
\label{prop: qr}
Fix an elliptic curve $E$ defined over $\Fq$ of characteristic not equal to two, and $P \in E(\Fq)$ of order greater than three and relatively prime to $q-1$.  Suppose that $\phi(P)$ is a quadratic non-residue.  Then, with knowledge of the quadratic residuosity of any product of the form
\begin{equation}
\label{eqn: prod}
\prod_{i=1}^N W_{E,P}(p_i(k))^{e_i}, 
\end{equation}
where the $e_i \in \Z$, and $p_i(x) \in \Z[x]$ of degree at most $D$, and
%\begin{equation*}
%\label{eqn: polyconst}
$t(x) = \sum_{i=1}^N e_i p_i(x)^2$
%\end{equation*}
is not constant as a function $\Z/2\Z \rightarrow \Z / 2\Z$, the parity of $k$ can be determined in time $O( N(D (\log D)(\log q)^2 + (\log q)^3) )$.
\end{proposition}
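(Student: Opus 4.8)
The plan is to follow the proof of Proposition \ref{prop: prod1} essentially verbatim, replacing the single discrete logarithm in $\F_q^*$ by a single evaluation of the quadratic character, and replacing the recovery of $k$ from $t(k)$ by the (trivial) recovery of $k \bmod 2$ from $t(k) \bmod 2$. Write $c = \sum_{i=1}^N e_i$ and let $\chi$ denote the quadratic character of $\F_q^*$, so that $\chi(\phi(P)) = -1$ by hypothesis. Substituting $n = p_i(k)$ into formula \eqref{eqn: perfper}, that is $\widetilde{W}_{E,P}(n) = \phi([n]P) = \phi(P)^{n^2-1} W_{E,P}(n)$, raising to the power $e_i$, and multiplying over $i$ (using $\sum_i e_i(p_i(k)^2 - 1) = t(k) - c$) gives
\[
\prod_{i=1}^N \widetilde{W}_{E,P}(p_i(k))^{e_i} = \phi(P)^{t(k)-c}\prod_{i=1}^N W_{E,P}(p_i(k))^{e_i}.
\]
As in Proposition \ref{prop: prod1}, the hypotheses guarantee the product \eqref{eqn: prod} is a nonzero element of $\F_q$, so every factor appearing here is a unit.

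I would then apply $\chi$ to both sides. Since $\chi$ is multiplicative with values in $\{\pm 1\}$, it commutes with products, and $\chi(x)^{-1} = \chi(x)$, so negative exponents cause no difficulty; one obtains
\[
\prod_{i=1}^N \chi\bigl(\widetilde{W}_{E,P}(p_i(k))\bigr)^{e_i} = (-1)^{t(k)-c}\,\chi\!\left(\prod_{i=1}^N W_{E,P}(p_i(k))^{e_i}\right).
\]
The rightmost factor is exactly the quadratic residuosity of the product \eqref{eqn: prod}, which is given as input. On the left, $\widetilde{W}_{E,P}(p_i(k)) = \phi([p_i(k)]P)$ depends only on the point $[p_i(k)]P$, which, exactly as in the proof of Proposition \ref{prop: prod1}, can be formed from $P$ and $Q = [k]P$ without knowing $k$ in $O(D\log D)$ curve operations; then $\phi$ of each of these $N$ points is computed by Theorem \ref{thm: calcphi} in $O((\log q)^3)$ time, and its residuosity by Euler's criterion in $O((\log q)^3)$ time. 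Hence, in total time $O(N(D(\log D)(\log q)^2 + (\log q)^3))$, both sides of the displayed identity are known apart from the sign $(-1)^{t(k)-c}$, which is thereby determined; as $c$ is known, this yields $t(k) \bmod 2$.

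Finally I would invoke the hypothesis that $t$ induces a non-constant map $\Z/2\Z \to \Z/2\Z$: the only two non-constant self-maps of $\Z/2\Z$ are $x \mapsto x$ and $x \mapsto x+1$, and both are bijections, so $t(k) \bmod 2$ determines $k \bmod 2$, which is the parity of $k$. The only points that require more than routine bookkeeping are: verifying that $\chi$ genuinely distributes over the product \eqref{eqn: prod} in the presence of negative exponents (handled above by $\chi(x)^{-1} = \chi(x)$); checking the per-term costs against the claimed bound; and the elementary fact that a non-constant function on a two-element set is invertible, which is precisely where the hypothesis on $t$ is used. I do not anticipate a genuine obstacle, since the argument is the quadratic-residuosity analogue of Proposition \ref{prop: prod1}, with the index-calculus step collapsed to inverting a bijection of $\Z/2\Z$.
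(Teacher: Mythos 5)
Your proposal is correct and follows essentially the same route as the paper: reduce, via equation \eqref{eqn: perfper} as in Proposition \ref{prop: prod1}, to an identity expressing $\phi(P)^{t(k)}$ (up to known quantities) in terms of computable $\phi$-values and the given product, then read off the parity of $t(k)$ by working modulo squares since $\phi(P)$ is a non-residue, and finally recover the parity of $k$ from the non-constancy of $t$ modulo $2$. The only difference is cosmetic: you explicitly track the constant $c = \sum_i e_i$ and phrase the square-class argument via the quadratic character, where the paper simply says to consider the equation $A = B^{t(k)}$ in $K^*$ modulo $(K^*)^2$ and to check the parities of $t(0)$ and $t(1)$.
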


\begin{proof}
By Theorem \ref{thm: perfper}, the value $t(k)$ satisfies an equation in $\F_q^*$ of the form $A = B^{t(k)}$ (exactly as in the proof of Proposition \ref{prop: prod1}).  The quadratic residuosity of $A$ can be calculated in time $O(N ( D (\log D) (\log q)^2 + (\log q)^3) )$ as in the proof of Proposition \ref{prop: prod1}.  Now, $B = \phi(P)$ is a quadratic non-residue.  The parity of $t(k)$ can be calculated from these values in constant time (i.e. consider the question in $K^*$ modulo $(K^*)^2$).  The parity of $k$ is determined by checking the parity of $t(0)$ and $t(1)$.  This final step takes time $O(D)$.
\end{proof}

\begin{corollary}
\label{cor: qr}
Let $E$ be an elliptic curve over a field of characteristic not equal to two.  Let $P$ be a point of odd order such that $\phi(P)$ is a quadratic non-residue, and let $k$ be the minimal multiplier of a multiple $Q$ of $P$.  Given $P, Q$ and an oracle which can determine the quadratic residuosity of $W_{E,P}(k)$ in time $O(T(q))$, the elliptic curve discrete logarithm for any such $Q$ can be determined in time $O((\log q)(T(q)+(\log q)^3) )$.
\end{corollary}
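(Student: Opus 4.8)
The plan is to read the statement as the composition of two results already in hand: Proposition~\ref{prop: parity}, which turns an oracle for the \emph{parity of the minimal multiplier} into an ECDLP solver by binary descent, and Proposition~\ref{prop: qr}, which produces exactly such a parity oracle from residuosity information about suitable products of sequence terms. So the entire task is to exhibit the residuosity oracle of the hypothesis as an instance of Proposition~\ref{prop: qr} and then invoke Proposition~\ref{prop: parity}.

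First I would take, in Proposition~\ref{prop: qr}, the trivial choice $N=1$, $e_1 = 1$, $p_1(x) = x$, so that the product \eqref{eqn: prod} is the single term $W_{E,P}(k)$ and $t(x) = x^2$; this is non-constant as a map $\Z/2\Z \to \Z/2\Z$, and the standing hypotheses of Proposition~\ref{prop: qr} (characteristic $\neq 2$, and $\operatorname{ord}(P)$ greater than $3$ and relatively prime to $q-1$, the latter implicit in the very requirement that $\phi(P)$ be defined) hold here. Concretely, equation \eqref{eqn: perfper} with $n=k$ reads $\phi(Q) = \phi(P)^{k^2-1}W_{E,P}(k)$, hence $\phi(P)^{k^2} = \phi(P)\,\phi(Q)\,W_{E,P}(k)^{-1}$, which is an equation $A = B^{t(k)}$ with $B = \phi(P)$ a quadratic non-residue. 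Applying the quadratic character (a homomorphism $\Fq^* \to \{\pm1\}$), $A$ is a square exactly when $k^2$, hence $k$, is even; and the residuosity of $A$ is obtained from that of $\phi(P)$ (known by hypothesis), that of $\phi(Q)$ (computed in $O((\log q)^3)$ time from $Q$ alone by Theorem~\ref{thm: calcphi}, since $[p_1(k)]P = Q$ needs no knowledge of $k$), and that of $W_{E,P}(k)$ (one call to the given oracle, time $O(T(q))$). Thus, from $P$ and any nonzero $Q \in \langle P\rangle$, one recovers the parity of the minimal multiplier of $Q$ in time $O(T(q) + (\log q)^3)$.

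Then I would feed this parity oracle into Proposition~\ref{prop: parity}, whose hypothesis that $P$ has odd order relatively prime to $q-1$ is part of the present hypotheses: with $T'(q) := T(q) + (\log q)^3$, the descent of Proposition~\ref{prop: parity} recovers the minimal multiplier $k$ of $Q$ in time $O(T'(q)\log q + (\log q)^2) = O((\log q)(T(q)+(\log q)^3))$, the asserted bound. I do not expect any real obstacle: the corollary is a repackaging of the two preceding propositions. The only steps that require genuine attention are the complexity bookkeeping --- checking that converting a residuosity fact about $W_{E,P}(k)$ into a parity fact about $k$ costs only $O((\log q)^3)$ beyond the oracle call, so that it is absorbed by the $O(\log q)$ descent steps --- and verifying that the hypotheses on $P$ in Propositions~\ref{prop: parity} and~\ref{prop: qr} are simultaneously implied by those in the statement.
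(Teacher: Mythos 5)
Your proposal is correct and is exactly the paper's argument: the paper proves the corollary by invoking Proposition~\ref{prop: qr} with $N=1$, $e_1=1$, $p_1(x)=x$ to get a parity oracle and then composing with the descent of Proposition~\ref{prop: parity}. Your write-up merely spells out the residuosity computation and the complexity bookkeeping that the paper leaves implicit.
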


\begin{proof}
This follows from Proposition \ref{prop: qr} with $N=1, e_1 = 1, p_1(x) = x$ and Proposition \ref{prop: parity}.
\end{proof}

\noindent
A few remarks are in order.
\begin{enumerate}
%\item The hypotheses on the $t(x)$ of Proposition \ref{prop: qr} and Proposition \ref{prop: prod1} are mutually exclusive.
%\item  In practice, even if we can compute the full value of \eqref{eqn: prod} and it fits the hypotheses of Proposition \ref{prop: prod1}, it may still be preferable to use Corollary \ref{cor: qr} rather than Proposition \ref{prop: prod1} to calculate an elliptic curve discrete logarithm, since it does not require the costly index calculus in $\F_q^*$.  
\item If $\phi(P)$ is a quadratic residue, one solution to this obstacle is to replace the initial problem of $Q = [k]P$ with the equivalent problem of $[n]Q = [k]([n]P)$ for any $n$ such that $\phi([n]P)$ is a quadratic non-residue.  The sequence $\widetilde W_{E,P}(n)$ can be calculated term-by-term until such an $n$ is found.  The existence of such an $n$ is guaranteed when $-1$ is a quadratic non-residue in $\Fq$, in which case $\phi([m-1]P) = -\phi(P)$ suffices.  Other cases are less clear.
\item The condition that the order of $P$ is relatively prime to the even quantity $q-1$ is required in several ways.  First, for the very definition of $\phi$ (Theorem \ref{thm: perfper}).  Furthermore, if the order $m$ of the group $\left< P \right>$ is even, multiplication by $2$ is not an automorphism, and so there is no unique `half' of a point (this is the same difficulty that prevents this sort of parity attack on an $\Fq^*$ discrete log).  However, if $m | (q-1)$ is odd, then $k$ satisfies a discrete logarithm equation of the form $A=B^k$ in the group $K^* / (K^*)^m$, which has an odd number of elements.  Therefore, this does not determine the parity of $k$.
\item Similarly, if $q-1$ is odd (i.e. $\Fq$ has characteristic $2$), then $A=B^k$ does not carry information about the parity of $k$.
\end{enumerate}
%
%\begin{remark}
%Consider the case that the order of $P$ divides the order of $K^*$ (embedding degree one).  Since attacks like MOV can reduce the calculation of $k$ to a discrete logarithm in $K^*$ without the calculation of products like \eqref{eqn: prod}, one may wonder:  can the parity of $k$ can be determined without calculating the residuosity of products like \eqref{eqn: prod}?  Unfortunately, the answer is no.  In the case that the order $m$ of the group $\left< P \right>$ is even, multiplication by $2$ is not an automorphism.  Taking half of a point does not have a unique answer, and there is no known way at each stage to predict which of the two possible choices is the correct one.  If $m | q-1$ is odd, then $k$ satisfies a discrete logarithm equation of the form $A=B^k$ in the group $K^* / (K^*)^m$, which has an odd number of elements.  Therefore, this does not determine the parity of $k$.
%\end{remark}

\section{The EDS Residue Problem}
\label{sec: seqres2}

In light of the preceeding section, it is natural to define the problem of EDS Residue (Problem \ref{prob: seqres}).  In Section \ref{sec: equiv} we will show that it is equivalent to the elliptic curve discrete logarithm in sub-exponential time.  How might one determine the quadratic residuosity of $W_{E,P}(k)$?  Our first observation is that knowledge of the residuosity of one term $W_{E,P}(k)$ would determine the residuosity of the next term.

\begin{proposition}
Suppose $Q$ is a known element of $\left< P \right>$, but that its minimal multiplier $k$ is unknown.  The quadratic residuosity of $W_{E,P}(k+1)/W_{E,P}(k)$ can be calculated in $O((\log q)^3)$ time.
\end{proposition}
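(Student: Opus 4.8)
The plan is to start from equation~\eqref{eqn: 2perfper} of Theorem~\ref{thm: redtodlp}, which, after rearranging, expresses the desired ratio in terms of quantities computable without knowledge of $k$, up to a square factor. Explicitly, \eqref{eqn: 2perfper} gives
\[
\frac{W_{E,P}(k+1)}{W_{E,P}(k)} = \phi(P)^{-(2k+1)}\,\frac{\phi([k+1]P)}{\phi([k]P)} = \bigl(\phi(P)^{-k}\bigr)^{2}\cdot\phi(P)^{-1}\cdot\frac{\widetilde W_{E,P}(k+1)}{\widetilde W_{E,P}(k)},
\]
where $\widetilde W_{E,P}(k) = \phi([k]P) = \phi(Q)$ and $\widetilde W_{E,P}(k+1) = \phi([k+1]P) = \phi(Q+P)$. (We use here the standing hypothesis, inherited from Theorem~\ref{thm: perfper}, that $\operatorname{ord}(P)$ is relatively prime to $q-1$, so that $\phi$ is defined; we may also assume $Q\notin\{\mathcal{O},-P\}$, so that both $W_{E,P}(k)$ and $W_{E,P}(k+1)$ are nonzero and the quotient has a well-defined square class.)

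Next I would observe that the first factor $\bigl(\phi(P)^{-k}\bigr)^{2}$ is a square in $\Fq^*$, and that $\phi(P)^{-1}$ lies in the same square class as $\phi(P)$; hence, modulo $(\Fq^*)^2$,
\[
\frac{W_{E,P}(k+1)}{W_{E,P}(k)}\ \equiv\ \phi(P)\cdot\frac{\phi(Q+P)}{\phi(Q)}.
\]
Thus the quadratic residuosity of $W_{E,P}(k+1)/W_{E,P}(k)$ equals that of the element $\phi(P)\,\phi(Q+P)\,\phi(Q)^{-1}$, and the key point is that every factor on the right can be evaluated without knowing $k$: $\phi(P)$ directly from its definition~\eqref{eqn: phip}, and $\phi(Q)$ and $\phi(Q+P)$ from Theorem~\ref{thm: calcphi} applied to the points $Q$ and $Q+P$ (note that $Q+P$ is obtained from $P$ and $Q$ by a single curve addition).

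For the complexity bound: computing $Q+P$ costs $O((\log q)^2)$; each of $\phi(P)$, $\phi(Q)$, $\phi(Q+P)$ costs $O((\log q)^3)$ by Theorem~\ref{thm: calcphi} together with \eqref{eqn: phip}; and forming the product and determining the square class of the result (for instance by raising it to the power $(q-1)/2$) is a further $O((\log q)^3)$. The total is therefore $O((\log q)^3)$, as claimed. There is no real obstacle in this argument: the only thing one must notice is that the genuinely $k$-dependent part of \eqref{eqn: 2perfper}, namely $\phi(P)^{-2k}$, is automatically a square and hence invisible to the residuosity, leaving a short computation in terms of the points $Q$ and $Q+P$ alone.
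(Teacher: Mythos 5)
Your proposal is correct and follows essentially the same route as the paper's proof: both start from the identity obtained by combining \eqref{eqn: perfper} at $n=k$ and $n=k+1$ (i.e.\ \eqref{eqn: 2perfper}), note that the $k$-dependent factor $\phi(P)^{2k}$ is a square and hence irrelevant to residuosity, and then compute $\phi(P)$, $\phi(Q)$, $\phi(Q+P)$ via Theorem \ref{thm: calcphi} in $O((\log q)^3)$ time. Your write-up merely makes explicit the square-class bookkeeping and the final Euler-criterion step that the paper leaves implicit.
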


\begin{proof}
From \eqref{eqn: perfper} with $n=k$ and $n=k+1$, we have
\[
\frac{\phi(Q)}{\phi(Q+P)} = \phi(P)^{2k+1}\left(\frac{W_{E,P}(k+1)}{W_{E,P}(k)}\right).
\]
The calculation of the terms $\phi(P), \phi(Q),$ and $\phi(P+Q)$ each take $O((\log q)^3)$ time.
\end{proof}

Therefore, based on knowledge of $Q$ but not $k$, the sequence $$ S(n) = \left( \frac{W_{E,P}(n)}{q} \right)\left( \frac{W_{E,P}(k)}{q} \right) $$for $n=k, \ldots, k+N$ may be calculated in $O(N\log q)$ time.  Then the sequence $$\left( \frac{W_{E,P}(n)}{q} \right)$$ is either $S(n)$ or $-S(n)$.  To determine which is to determine the quadratic residuosity of $W_{E,P}(k)$.

Therefore, if some bias, or some pattern, for quadratic residues of the elliptic divisibility sequence $W_{E,P}(n)$ were known, then the correct choice of the two sequences above could be determined.  However, 
as yet we have no evidence to suggest that
%there is no reason to suspect that 
the ratio of quadratic residues among the terms is not $1/2$ in general.  %See Appendix \ref{app: data} for some numerical data. NEEDSWORK: appendix

\section{ECDLP through EDS Discrete Log in the case of Perfect Periodicity}
\label{sec: edsdlp}

Problem \ref{prob: edsdlp} (EDS Discrete Log) is less unusual in flavour than the other problems considered here:  general discrete logarithm attacks will apply.  Recall the proof of Theorem \ref{thm: shipsey}, in which \emph{blocks centred at $k$} are defined -- denote this as $B(k)$.  From $B(k)$, the recurrence relation can be used to calculate $B(2k)$ or $B(2k+1)$.  In fact, Shipsey goes further, and shows how two blocks $B(k), B(k')$ can be added to obtain a block $B(k+k')$ in a similarly efficient manner (see \cite[p. 23]{Shi}).  This means that the sequence of blocks $B(n)$ is a sequence along which we can move easily by addition and $\Z$-multiplication.  Therefore, algorithms such as Baby-Step-Giant-Step and Pollard's $\rho$ can be applied to this problem.

\section{Equivalence of Hard Problems}
\label{sec: equiv}

\begin{proof}[Proof of Theorem \ref{thm: equiv}]
{\bf $(3) \implies (1)$:}  Corollary \ref{cor: qr}.  
{\bf $(1) \implies (2)$:}  If $k$ is known, we can assume $0 < k \leq \operatorname{ord}(P)$, and then $W_{E,P}(k)$ can be calculated in $O((\log k) (\log q)^2)=O((\log q)^3)$ time.  
{\bf $(2) \implies (3)$:}  Residuosity of a value in $\Fq^*$ can be determined in sub-exponential time (see \cite{ItoTsu} for algorithms).
{\bf $(1) \implies (4)$:}  Theorem \ref{thm: calcfromphi}.
{\bf $(4) \implies (1)$:}  Theorem \ref{thm: calcphi} allows calculation of $\phi([k]P)$, $\phi([k+1]P)$, and $\phi([k+2]P)$ in sub-exponential time.
\end{proof}

\bibliographystyle{splncs}
\bibliography{ellnet}

\def\cprime{$'$}
\begin{thebibliography}{10}

\bibitem{Shi}
Shipsey, R.:
\newblock Elliptic Divibility Sequences.
\newblock PhD thesis, Goldsmiths, University of London (2001)

\bibitem{War}
Ward, M.:
\newblock Memoir on elliptic divisibility sequences.
\newblock Amer. J. Math. \textbf{70} (1948)  31--74

\bibitem{Swa}
Swart, C.:
\newblock Elliptic curves and related sequences.
\newblock PhD thesis, Royal Holloway and Bedford New College, University of
  London (2003)

\bibitem{Aya}
Ayad, M.:
\newblock P\'eriodicit\'e (mod {$q$}) des suites elliptiques et points
  {$S$}-entiers sur les courbes elliptiques.
\newblock Ann. Inst. Fourier (Grenoble) \textbf{43}(3) (1993)  585--618

\bibitem{Sil5}
Silverman, J.H.:
\newblock Common divisors of elliptic divisibility sequences over function
  fields.
\newblock Manuscripta Math. \textbf{114}(4) (2004)  431--446

\bibitem{Sil4}
Silverman, J.H.:
\newblock {$p$}-adic properties of division polynomials and elliptic
  divisibility sequences.
\newblock Math. Ann. \textbf{332}(2) (2005)  443--471 (Addendum 473--474)

\bibitem{EveMclWar}
Everest, G., Mclaren, G., Ward, T.:
\newblock Primitive divisors of elliptic divisibility sequences.
\newblock J. Number Theory \textbf{118}(1) (2006)  71--89

\bibitem{Sta4}
Stange, K.E.:
\newblock Elliptic nets and elliptic curves.
\newblock \url{http://arxiv.org/abs/0710.1316v1}, submitted (2007)

\bibitem{Sta1}
Stange, K.E.:
\newblock Elliptic nets and elliptic curves.
\newblock PhD thesis, Brown University (May 2008)

\bibitem{EvePooShpWar}
Everest, G., Poorten, A.v.d., Shparlinski, I., Ward, T.:
\newblock Elliptic Divisibility Sequences.
\newblock In: Recurrence Sequences. American Mathematical Society, Providence
  (2003)  163--175

\bibitem{Sta3}
Stange, K.E.:
\newblock The {T}ate pairing via elliptic nets.
\newblock In: Pairing-Based Cryptography - PAIRING 2007. Volume 4575 of Lecture
  Notes in Comput. Sci.
\newblock Springer, Berlin (2007)  329--348

\bibitem{GosOrmSch}
Gosper, R.~W., O.H., Schroeppel, R.:
\newblock Using somos sequences for cryptography

\bibitem{Sil1}
Silverman, J.H.:
\newblock The arithmetic of elliptic curves. Volume 106 of Graduate Texts in
  Mathematics.
\newblock Springer-Verlag, New York (1992) Corrected reprint of the 1986
  original.

\bibitem{Sta5}
Stange, K.E.:
\newblock Elliptic nets, generalised {J}acobians and bi-extensions.
\newblock In preparation

\bibitem{FreLan}
Frey, G., Lange, T.:
\newblock Background on curves and {J}acobians.
\newblock In: Handbook of elliptic and hyperelliptic curve cryptography.
  Discrete Math. Appl. (Boca Raton).
\newblock Chapman \& Hall/CRC, Boca Raton, FL (2006)  45--85

\bibitem{BacSha}
Bach, E., Shallit, J.:
\newblock Algorithmic number theory. {V}ol. 1.
\newblock Foundations of Computing Series. MIT Press, Cambridge, MA (1996)
  Efficient algorithms.

\bibitem{DuqFre2}
Duquesne, S., Frey, G.:
\newblock Background on pairings.
\newblock In: Handbook of elliptic and hyperelliptic curve cryptography.
  Discrete Math. Appl. (Boca Raton).
\newblock Chapman \& Hall/CRC, Boca Raton, FL (2006)  115--124

\bibitem{Gal}
Galbraith, S.D.:
\newblock Pairings.
\newblock In: Advances in elliptic curve cryptography. Volume 317 of London
  Math. Soc. Lecture Note Ser.
\newblock Cambridge Univ. Press, Cambridge (2005)  183--213

\bibitem{MenOkaVan}
Menezes, A.J., Okamoto, T., Vanstone, S.A.:
\newblock Reducing elliptic curve logarithms to logarithms in a finite field.
\newblock IEEE Trans. Inform. Theory \textbf{39}(5) (1993)  1639--1646

\bibitem{FreRuc}
Frey, G., R{\"u}ck, H.G.:
\newblock A remark concerning {$m$}-divisibility and the discrete logarithm in
  the divisor class group of curves.
\newblock Math. Comp. \textbf{62}(206) (1994)  865--874

\bibitem{CraPom}
Crandall, R., Pomerance, C.:
\newblock Prime numbers.
\newblock Springer-Verlag, New York (2001) A computational perspective.

\bibitem{FonHanLopMen}
Kenny~Fong, Darrel~Hankerson, J.L., Menezes, A.:
\newblock Field inversion and point halving revisited.
\newblock Technical Report, CORR 2003-18, Department of Combinatorics and
  Optimization, University of Waterloo, Canada (2003)

\bibitem{ItoTsu}
Itoh, T., Tsujii, S.:
\newblock An efficient algorithm for deciding quadratic residuosity in finite
  fields {${\rm GF}(p\sp m)$}.
\newblock Inform. Process. Lett. \textbf{30}(3) (1989)  111--114

\end{thebibliography}

\end{document}